\title{\vskip-1.0em\sc Directly finite algebras of pseudofunctions on locally compact groups}
\author{\sc Yemon Choi}
\date{2nd February 2014}
\date{13th May 2014}
\newcommand{\contact}{%
Department of Mathematics and Statistics\\
Fylde College\\
Lancaster University\\
Bailrigg, Lancaster\\
Lancashire LA1 4YF\\
Email: \texttt{y.choi1@lancaster.ac.uk}
}
\numberwithin{equation}{section}
\begin{document}
\maketitle

\begin{abstract}
An algebra $A$ is said to be directly finite if each left invertible element in the (conditional) unitization of $A$ is right invertible.
We show that the reduced group $\Cst$-algebra of a unimodular group is directly finite, extending known results for the discrete case.
We also investigate the corresponding problem for algebras of $p$-pseudofunctions,
showing that these algebras are directly finite if $G$ is amenable and unimodular, or unimodular with the Kunze--Stein property.

An exposition is also given of how existing results from the literature imply that $L^1(G)$ is not directly finite when $G$ is the affine group of either the real or complex line.

\medskip\noindent MSC 2010: 
22D15, 43A15 (primary); 22D25, 46L05 (secondary)
\end{abstract}


\begin{section}{Introduction}\label{s:intro}
This article arose from work on the following problem.
Let $G$ be a locally compact group, let $1<p<\infty$, and let $f\in C_c(G)$. By integrating $f$ against the left regular representation of $G$ on $L^p(G)$, we obtain a bounded linear operator $\lm_p(f):L^p(G)\to L^p(G)$.

\paragraph{Question.}
How big is the approximate point spectrum of $\lm_p(f)$\/?

\medskip\noindent
When $G$ is abelian, Gelfand/Fourier theory shows us that the spectrum of $\lm_p(f)$ consists entirely of approximate eigenvalues; that is, the approximate point spectrum is as big as possible. 
For non-abelian groups, new tools are needed, and we do not have a complete answer.

Nevertheless, when $G$ is discrete, it can be shown that the spectrum of $\lm_2(f)$ consists entirely of approximate eigenvalues. The key to proving this is an old observation, apparently first noted by Kaplansky, that the group algebra of a \emph{discrete} group is directly finite, as defined in the abstract. (See \cite[Remark 2.10]{YC_surjunc} for some further remarks on the history of this observation, and later proofs.)
It is therefore natural to ask if the same is true for an arbitrary locally compact group, and this leads to the first main result of this article.

\begin{thm}\label{t:PF2_unimodular}
If $G$ is unimodular, then its reduced group $\Cst$-algebra is \qdf.
\end{thm}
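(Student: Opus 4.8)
The plan is to use the standard fact that a unital $\Cst$-algebra is \qdf{} exactly when every isometry in it is a unitary, and then to exploit the canonical trace on $\VN(G)$ that exists precisely because $G$ is unimodular. First I would record the reduction. In any unital $\Cst$-algebra $B$ an element $a$ is left invertible iff $a^*a$ is invertible, in which case $u=a(a^*a)^{-1/2}$ is an isometry and $a=u(a^*a)^{1/2}$; hence $B$ is \qdf{} iff every isometry $u\in B$ satisfies $uu^*=\id$. Applying this with $A=\Cst_r(G)$ and $B=\cu A=\Cplx\id\oplus A$, I would take an isometry $v=\al\id+x$ with $x\in A$, and note that comparing the $\Cplx\id$-components in $v^*v=\id$ forces $\abs{\al}=1$; replacing $v$ by $\overline{\al}v$ I may assume $v=\id+x$. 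Then $e\defeq\id-vv^*=-(x+x^*+xx^*)$ lies in $A$ and is a projection, and the whole theorem reduces to showing $e=0$.

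The tool for this is the Plancherel trace. Since $G$ is unimodular, the canonical weight on $\VN(G)$ is a faithful, normal, semifinite \emph{trace} $\tau$, and for $f\in C_c(G)$ one has $\tau(\lm_2(f)^*\lm_2(f))=\norm{f}_2^2<\infty$. Thus each $\lm_2(f)$ is $\tau$-Hilbert--Schmidt, hence $\tau$-compact; as these span a dense subalgebra of $\Cst_r(G)$ and the $\tau$-compact operators $\mathcal{K}_\tau$ form a norm-closed ideal, every element of $\Cst_r(G)$ is $\tau$-compact. In particular $x$ is $\tau$-compact and $e$ is a $\tau$-compact projection, so $\tau(e)<\infty$.

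Finally I would invoke Breuer--Fredholm theory in the semifinite algebra $\VN(G)$. Because $x$ is $\tau$-compact, $v=\id+x$ is invertible modulo $\mathcal{K}_\tau$, so it is Breuer--Fredholm with a well-defined real index $\tau(\id-v^*v)-\tau(\id-vv^*)$; since $v^*v=\id$ this index equals $-\tau(e)$. The index is unchanged under perturbation of $v$ by $\tau$-compact operators, so it agrees with the index of $\id$, namely $0$; therefore $\tau(e)=0$, and faithfulness of $\tau$ gives $e=0$, i.e.\ $v$ is unitary.

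The main obstacle is precisely that, for non-discrete $G$, the trace is only semifinite: $\tau(\id)=\infty$, so the classical discrete argument breaks down. For instance, the projections $v^n e(v^*)^n$ $(n\geq 0)$ are mutually orthogonal and each has trace $\tau(e)$, which in the discrete case could not all fit inside $\id$ and would force $\tau(e)=0$; with $\tau(\id)=\infty$ this yields no contradiction. What rescues the proof is that the perturbation $x$ is \emph{$\tau$-compact}, which is exactly what unimodularity buys (traciality of the Plancherel weight together with $\tau(\lm_2(f)^*\lm_2(f))=\norm{f}_2^2$); this compactness is what pins the index at $0$ despite $\tau$ failing to be norm-continuous.
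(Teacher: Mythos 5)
Your proof is correct --- modulo citing Breuer--Fredholm theory, which does hold in the generality you need --- but it takes a genuinely different route from the paper's. The two arguments share the same engine: unimodularity provides the Plancherel trace $\tau$ on $\VN(G)$, faithful and tracial, with $\tau(\lm_2(f)^*\lm_2(f))=\norm{f}_2^2<\infty$ for $f\in C_c(G)$, so that a dense subalgebra of $\Cst_r(G)$ consists of $\tau$-Hilbert--Schmidt elements. But the paper never invokes index theory, nor any von Neumann algebra structure beyond the existence of this trace: it proves (Proposition~\ref{p:df-cstar}) that the norm-closure of the ideal of definition $A^\tau$ of any faithful tracial weight on a $\Cst$-algebra is \qdf, via the purely algebraic observation that if $a\qm b=0$ with $a,b\in A^\tau$, then $p\defeq b\qm a=ab-ba$ is an idempotent in $A^\tau$; Lemma~\ref{l:idempotent-folklore} replaces $p$ by a hermitian idempotent $e$ satisfying $\tau(e)=\tau(p)=\tau(ab)-\tau(ba)=0$, and faithfulness kills $e$ and hence $p$. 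The step from the dense ideal to its closure is the elementary Banach-algebraic Proposition~\ref{p:qdf-ideal}, and the group case then follows from Hilbert-algebra theory (Corollary~\ref{c:hilbert-algebra-is-df}). You instead reduce to isometries $v=\id+x$ in the unitization by polar decomposition, show $\Cst_r(G)\subseteq\mathcal{K}_\tau$ (using the same Hilbert--Schmidt density plus norm-closedness of the $\tau$-compact ideal), and finish with invariance of the Breuer--Fredholm index under $\tau$-compact perturbations: $0=\operatorname{ind}(\id)=\operatorname{ind}(v)=-\tau(\id-vv^*)$. That invariance theorem is indeed valid in semifinite, non-factor algebras such as $\VN(G)$ (Breuer; see also Phillips--Raeburn or Carey--Phillips--Rennie--Sukochev for treatments relative to a fixed trace), so nothing essential is missing; but it is a far heavier import than anything in the paper, and its proof, unwound, rests on trace identities of just the sort the paper applies bare-handed. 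What your route buys: a vivid explanation of why unimodularity matters --- your diagnosis that the orthogonal projections $v^ne(v^*)^n$ give no contradiction once $\tau(\id)=\infty$, and that $\tau$-compactness of the perturbation is what pins the index at $0$, is exactly right --- plus a connection to index theory. What the paper's route buys: a short, self-contained argument, stated for arbitrary $\Cst$-algebras carrying a densely-defined faithful trace, needing neither normality of the weight nor any Fredholm machinery.
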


The proof boils down to an extension of the technique used in 
\cite{Mon_dirfin} for discrete groups, with the ``Kaplansky trace'' replaced by the ``Plancherel weight''. Note, however, that Theorem~\ref{t:PF2_unimodular} does not work if we replace the reduced group $\Cst$-algebra with the group von Neumann algebra. For instance, if $G$ is the real Heisenberg group, then its group von Neumann algebra is isomorphic to $L^\infty(\Real)\overline{\otimes} \Bdd(L^2(\Real))$ and therefore contains a large supply of left-invertible elements that are not invertible.
The same phenomenon also occurs for semisimple Lie groups that are not compact.

By repeating the arguments used in the author's previous article \cite{YC_surjunc} (specifically, the proof of Theorem 1.2 in that paper), one obtains the following corollary of Theorem~\ref{t:PF2_unimodular}, giving an answer to certain cases of the question raised at the start.

\begin{cor}\label{c:all-approx-eval}
Let $G$ be unimodular and let $f\in C_c(G)$. Then the spectrum of the operator $\lm_2(f): L^2(G)\to L^2(G)$ consists entirely of approximate eigenvalues.
\end{cor}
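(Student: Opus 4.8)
The plan is to deduce this from Theorem~\ref{t:PF2_unimodular} via the standard dictionary between approximate eigenvalues and one-sided invertibility on Hilbert space. Recall that a scalar $\mu$ fails to be an approximate eigenvalue of a bounded operator $T$ on $L^2(G)$ exactly when $T-\mu\id$ is \emph{bounded below}, i.e.\ when there is $c>0$ with $\norm{(T-\mu\id)\xi}\geq c\norm{\xi}$ for every $\xi\in L^2(G)$. Since the approximate point spectrum is automatically contained in the spectrum, it suffices to prove the reverse inclusion. Writing $T=\lm_2(f)$, I would therefore establish the contrapositive: if $T-\mu\id$ is bounded below, then it is in fact invertible, so that $\mu$ lies outside the spectrum.

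Set $A=\PF_2(G)$, the reduced group $\Cst$-algebra, so that $T\in A$ and the element $a\defeq T-\mu\id$ lies in the conditional unitization $\cu{A}$, viewed as a unital $\Cst$-subalgebra of $\Bdd(L^2(G))$. The key observation is that, for an element of a $\Cst$-algebra, being bounded below as a Hilbert-space operator coincides with being left invertible \emph{within the algebra}. Indeed, $a$ is bounded below if and only if $a^*a$ is invertible in $\Bdd(L^2(G))$; by spectral permanence this is equivalent to $a^*a$ being invertible in $\cu{A}$, and then $(a^*a)^{-1}a^*$ is a left inverse for $a$ lying in $\cu{A}$.

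Now I would invoke Theorem~\ref{t:PF2_unimodular}: as $G$ is unimodular, $A$ is \qdf, so every left-invertible element of $\cu{A}$ is right invertible. Applying this to $a$ produces a right inverse, and an element of a unital algebra that has both a left and a right inverse is invertible. Hence $a=T-\mu\id$ is invertible in $\cu{A}$, and a final appeal to spectral permanence transports this invertibility back to $\Bdd(L^2(G))$. Thus $\mu$ is not in the spectrum of $T$, which is precisely the contrapositive we wanted.

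I do not anticipate any serious obstacle once Theorem~\ref{t:PF2_unimodular} is in hand; the argument is essentially a transcription of the proof of Theorem~1.2 in \cite{YC_surjunc}. The only point demanding care is the bookkeeping around the unitization: for non-discrete $G$ the identity operator does not belong to $A$, so one must work throughout in $\cu{A}$ and rely on spectral permanence both to read off left invertibility in $\cu{A}$ from bounded-belowness and to return the resulting invertibility to $\Bdd(L^2(G))$.
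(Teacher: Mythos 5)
Your proposal is correct and follows essentially the same route as the paper, which proves Corollary~\ref{c:all-approx-eval} simply by invoking the argument of Theorem~1.2 of \cite{YC_surjunc}: namely the standard dictionary whereby failure to be an approximate eigenvalue means $\lm_2(f)-\mu\id$ is bounded below, bounded-belowness yields a left inverse $(a^*a)^{-1}a^*$ inside the unitization of $\PF_2(G)$ by spectral permanence, and direct finiteness (Theorem~\ref{t:PF2_unimodular}) then upgrades this to invertibility. The only cosmetic point is that your final ``appeal to spectral permanence'' to pass from invertibility in the unitization back to $\Bdd(L^2(G))$ is unnecessary, since an inverse in the subalgebra is already an inverse in the larger algebra.
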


\begin{rem}
The arguments from  \cite{YC_surjunc} would actually give slightly more, namely that Corollary~\ref{c:all-approx-eval} remains true if $L^2(G)$ is replaced by any noncommutative $L^p$-space associated to the group von Neumann algebra $\VN(G)$. We omit the details, since this article is concerned with operators on $L^p(G)$ rather than on $L^p(\VN(G))$.
\end{rem}

In attempting to establish the same result for $\lm_p(f)$ where $p\neq 2$, a natural first step would be to show that $\PF_p(G)$, the algebra of~\dt{$p$-pseudofunctions on~$G$}, is also directly finite. We were unable to obtain a proof or a counter-example for general unimodular groups, but by building on the $\Cst$-case we obtain partial results.
The following result follows easily 
from combining Theorem~\ref{t:PF2_unimodular} with an embedding result of Herz.

\begin{thm}\label{t:PFp_amenable}
Let $G$ be an amenable, unimodular, locally compact group. Then $\PF_p(G)$ is directly finite for all $1<p<\infty$.
\end{thm}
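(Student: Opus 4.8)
The plan is to realise $\PF_p(G)$ as a subalgebra of the directly finite algebra $\PF_2(G)$ (the reduced group $\Cst$-algebra) and then to transport direct finiteness back along this inclusion. The only group-theoretic input beyond Theorem~\ref{t:PF2_unimodular} is Herz's embedding result, which supplies the inclusion precisely for amenable $G$.

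First I would construct the comparison homomorphism. Recall that $\PF_p(G)$ is the closed subalgebra of $\Bdd(L^p(G))$ generated by the operators $\lm_p(f)$ with $f\in C_c(G)$. Interpolating the convolution operator attached to $f$ between its actions on $L^p(G)$ and $L^{p'}(G)$, where $p'$ is the conjugate index, and using the duality between these two actions, yields the inequality $\norm{\lm_2(f)}\le\norm{\lm_p(f)}$ for every $f\in C_c(G)$; this holds for an arbitrary locally compact group and requires no amenability. Since both $\lm_p$ and $\lm_2$ are injective on $C_c(G)$, the assignment $\lm_p(f)\mapsto\lm_2(f)$ is well defined on the dense subalgebra $\{\lm_p(f):f\in C_c(G)\}$ and multiplicative there (because $\lm_q(f)\lm_q(g)=\lm_q(f*g)$), and the displayed inequality lets it extend to a norm-decreasing algebra homomorphism $\Phi\colon\PF_p(G)\to\PF_2(G)$.

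The decisive step is that $\Phi$ is injective, and this is exactly where amenability enters: Herz's embedding result guarantees that, for amenable $G$, the canonical map of $\PF_p(G)$ into $\PF_2(G)$ has trivial kernel. I expect this to be the main obstacle, in the sense that it must be imported wholesale from Herz's theory rather than derived by soft means; for non-amenable $G$ the map $\Phi$ can fail to be injective, so no purely formal argument can replace it.

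It then remains to transfer direct finiteness. I would extend $\Phi$ to a unital homomorphism $\fu{\Phi}\colon\fu{\PF_p(G)}\to\fu{\PF_2(G)}$ of the conditional unitizations; injectivity of $\Phi$ gives injectivity of $\fu{\Phi}$ (when $G$ is non-discrete the scalar summand is detected separately because $\PF_2(G)$ is non-unital, and when $G$ is discrete both algebras are already unital with $\Phi$ itself unital). Now suppose $a,b\in\fu{\PF_p(G)}$ satisfy $ab=\id$. Applying $\fu{\Phi}$ gives $\fu{\Phi}(a)\fu{\Phi}(b)=\id$ in $\fu{\PF_2(G)}$; since $G$ is unimodular, Theorem~\ref{t:PF2_unimodular} shows $\PF_2(G)$ is directly finite, so $\fu{\Phi}(b)\fu{\Phi}(a)=\id$. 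Thus $\fu{\Phi}(ba)=\id=\fu{\Phi}(\id)$, and injectivity of $\fu{\Phi}$ forces $ba=\id$. Hence $\PF_p(G)$ is directly finite, as required.
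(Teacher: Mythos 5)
Your overall strategy --- embed $\PF_p(G)$ into $\PF_2(G)$ by a homomorphism intertwining $\lm_p$ and $\lm_2$, then pull direct finiteness back along the injection using Theorem~\ref{t:PF2_unimodular} --- is exactly the paper's, and your final transfer step (passing to unitizations, using injectivity) is correct. The gap is in the construction of $\Phi$. You claim that $\norm{\lm_2(f)}\le\norm{\lm_p(f)}$ holds for \emph{every} locally compact group, by ``interpolating between the actions on $L^p(G)$ and $L^{p'}(G)$ and using the duality between them''. But duality does not give what you need: for unimodular $G$, the Banach-space adjoint of $\lm_p(f)$, viewed as an operator on $L^{p'}(G)$, is $\lm_{p'}(\check{f})$ --- convolution by the \emph{inverted} function $\check{f}(x)=f(x^{-1})$ --- and not $\lm_{p'}(f)$. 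So duality yields $\norm{\lm_p(f)}=\norm{\lm_{p'}(\check{f})}$, and Riesz--Thorin interpolation of convolution-by-$f$ between $L^p$ and $L^{p'}$ then gives only $\norm{\lm_2(f)}^2\le\norm{\lm_p(f)}\,\norm{\lm_{p'}(f)}=\norm{\lm_p(f)}\,\norm{\lm_p(\check{f})}$. To extract your inequality you would need $\norm{\lm_p(\check{f})}=\norm{\lm_p(f)}$ for all $f$. That is automatic when $G$ is abelian (inversion is then a group automorphism) or when $f=f^*$, but for general non-abelian $G$ it is not a formal consequence of duality: it amounts to identifying $\CV_p(G)$ with $\CV_{p'}(G)$ canonically, equivalently to the equality of the $L^p$-operator norms of left and right convolution by the same function, and no soft argument settles this.

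Moreover, the intermediate statement you want --- a bounded homomorphism $\Phi:\PF_p(G)\to\PF_2(G)$ with $\Phi\lm_p=\lm_2$ for every unimodular group, with amenability needed only for injectivity --- is not one you should expect to be true: the paper points out, citing Lohou\'e \cite{Lohoue_1980}, that amenability in Herz's embedding result is necessary even among unimodular groups, and this is why Section~\ref{s:PFp-is-qdf} has to treat Kunze--Stein groups by a quite different (dense one-sided ideal) argument rather than by a map defined on all of $\PF_p(G)$. In the paper's own proof (Theorem~\ref{t:pfp-in-pf2}, proved in Appendix~\ref{app:Herz-by-hand}), amenability enters exactly at the point you try to get for free: Herz's module theorem (Theorem~\ref{t:Ap-an-A2-module}) combined with the amenability-dependent Lemma~\ref{l:BAI-for-Ap} yields a contractive inclusion $\FA_2(G)\subseteq\FA_p(G)$ with dense range, and dualizing this produces \emph{simultaneously} the norm inequality $\norm{\lm_2(f)}\le\norm{\lm_p(f)}$ and the injectivity of the induced map $J$. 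Note finally that since you invoke Herz's embedding result anyway to obtain injectivity, your interpolation construction would be redundant even if it were correct: that result already supplies the bounded injective homomorphism, and quoting it is precisely how the paper concludes.
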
 

Results of Lohou\'e \cite{Lohoue_1980} show that the assumption of amenability in Herz's embedding result is necessary, even in if one restricts to unimodular examples.
Nevertheless, for certain non-amenable groups one can use a similar idea to obtain the same conclusion.

\begin{thm}\label{t:PFp_KS}
Let $G$ be a unimodular, locally compact group which has the \dt{Kunze--Stein} property (see Section~\ref{s:PFp-is-qdf} below for the definition). Then $\PF_p(G)$ is directly finite for all $1<p<\infty$.
\end{thm}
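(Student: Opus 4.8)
The plan is to imitate the amenable case, transferring direct finiteness from the reduced $\Cst$-algebra $\PF_2(G)$---which is \qdf\ by Theorem~\ref{t:PF2_unimodular}, as $G$ is unimodular---with the Kunze--Stein inequality playing the role that Herz's inclusion theorem plays in Theorem~\ref{t:PFp_amenable}. I will use the elementary fact that direct finiteness passes backwards along injective continuous homomorphisms: if $\theta\colon A\to B$ is such a map and $B$ is \qdf, then, extending $\theta$ to the conditional unitizations, $ab=\id$ forces $\theta(a)\theta(b)=\id$, hence $\theta(b)\theta(a)=\id$, hence $\theta(ba)=\theta(\id)$, and injectivity gives $ba=\id$. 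Since taking Banach-space adjoints is an isometric anti-isomorphism $\PF_p(G)\to\PF_{p'}(G)$ carrying $\lm_p(f)$ to $\lm_{p'}(\check f)$, and since direct finiteness is unchanged on reversing the product, it is enough to treat the range $1<p\le 2$.

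It then suffices to construct a bounded injective homomorphism $\theta\colon\PF_p(G)\to\PF_2(G)$ sending $\lm_p(f)$ to $\lm_2(f)$ for $f\in C_c(G)$. The existence and boundedness of such a $\theta$ is exactly the operator-norm domination $\norm{\lm_2(f)}\le C\norm{\lm_p(f)}$, which is dual to a bounded inclusion of Fig\`a-Talamanca--Herz algebras $\FA_2(G)\hookrightarrow\FA_p(G)$. Concretely, pairing $\lm_p(f)$ against a matrix coefficient $u=\pair{\lm(\cdot)\xi}{\eta}$ with $\xi,\eta\in L^2(G)$ gives $\abs{\pair{\lm_2(f)\xi}{\eta}}=\abs{\pair{\lm_p(f)}{u}}\le\norm{\lm_p(f)}\,\norm{u}_{\FA_p}$, so the task is to bound $\norm{u}_{\FA_p}$ by $C\norm{\xi}_2\norm{\eta}_2$. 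The Kunze--Stein property is precisely the hypothesis that yields this inclusion for a class of non-amenable groups, in place of Herz's amenability assumption. For injectivity I would observe that $C_c(G)$ sits densely inside $\FA_p(G)$ and also lies in $\FA_2(G)$, so the inclusion $\FA_2(G)\hookrightarrow\FA_p(G)$ has dense range; its adjoint $\CV_p(G)\to\CV_2(G)$ is therefore injective, and $\theta$ is the restriction of this adjoint to $\PF_p(G)\subseteq\CV_p(G)$.

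The crux, and the step I expect to be the main obstacle, is extracting the inclusion $\FA_2(G)\hookrightarrow\FA_p(G)$ from the scalar Kunze--Stein estimate $\norm{a*b}_2\le C\norm{a}_2\norm{b}_p$. Dualizing that estimate only gives $\norm{a*b}_{p'}\le C\norm{a}_2\norm{b}_2$ for $a,b\in L^2(G)$, which places the matrix coefficient $u$ in $L^{p'}(G)$; membership in $\FA_p(G)$ with controlled norm is a genuinely stronger statement, demanding an honest factorization of $u$ through $L^p(G)*L^{p'}(G)$. It is here that unimodularity and the precise quantitative form of the Kunze--Stein property must be used---equivalently, one must control the asymmetry between the $\PF_p$- and $\PF_{p'}$-norms (under inversion $f\mapsto\check f$), which is invisible at $p=2$ but obstructs a naive interpolation bound. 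Once this inclusion is in hand, the rest of the argument is formal.
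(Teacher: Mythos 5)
Your reduction to $1<p\le 2$ via adjoints matches the paper (Remark~\ref{r:half-suffices}), and you have correctly located the crux; but the step you defer---the bounded inclusion $\FA_2(G)\hookrightarrow\FA_p(G)$, equivalently the domination $\norm{\lm_2(f)}\leq C\norm{\lm_p(f)}$ for $f\in C_c(G)$---is not an obstacle to be overcome: it is false for the very groups the theorem is designed to cover. That inclusion is precisely Herz's theorem, and its amenability hypothesis is necessary: as the paper notes, results of Lohou\'e \cite{Lohoue_1980} show that Herz's embedding fails for non-amenable unimodular groups, and the prototypical Kunze--Stein groups (noncompact connected semisimple Lie groups with finite centre) are exactly such groups. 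The underlying confusion is between two norms: the Kunze--Stein inequality \eqref{eq:KS-ineq} bounds the operator norm of $\lm_2(g)$ by the $L^p$-\emph{function} norm $\norm{g}_p$, not by the $\CV_p$-operator norm $\norm{\lm_p(g)}$, and these are not comparable ($\norm{\lm_p(g)}$ can be far smaller than $\norm{g}_p$). So Kunze--Stein yields a bounded injective \emph{linear} map $\iKS\colon L^p(G)\to\PF_2(G)$, but no bounded homomorphism $\PF_p(G)\to\PF_2(G)$; the map $\theta$ you seek does not exist in general, and the ``honest factorization'' you hope to extract cannot be produced.

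The paper's proof is built to use exactly the estimate that Kunze--Stein does give. One forms $X_p(G)$, the closure of the diagonal copy $\{(f,\lm_p(f)) : f\in C_c(G)\}$ inside $\LplusPG{p}$---morally $L^p(G)\cap\PF_p(G)$---on which both norms are simultaneously available. Composing $\iKS$ with the (injective) first coordinate projection gives an injective \emph{algebra} homomorphism $X_p(G)\to\PF_2(G)$ (Lemma~\ref{l:KS-diagram}), bounded for the $X_p$-norm though not for the $\PF_p$-norm, so $X_p(G)$ is \qdf\ by Theorem~\ref{t:PF2_unimodular}. The image of $X_p(G)$ under the second coordinate projection is then a \emph{dense left ideal} of $\PF_p(G)$ (Lemma~\ref{l:dense-ideal-in-PFp}), isomorphic as an algebra to $X_p(G)$, and Proposition~\ref{p:qdf-ideal}---stated for left ideals precisely for this purpose---transfers direct finiteness from this dense left ideal to all of $\PF_p(G)$. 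No map defined (let alone bounded) on the whole of $\PF_p(G)$ is ever required; that is the idea your proposal is missing.
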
 

Examples of groups with the Kunze--Stein property are: semisimple Lie groups with finite centre, and certain automorphism groups of trees, in their natural totally disconnected topology. We note that a group is both amenable and Kunze--Stein if and only if it is compact.

It is tempting to try to remove the condition of unimodularity. However, such hopes are dashed by the following examples.

\begin{thm}\label{t:L1-not-df}
Let $G$ be the affine group of either $\Real$ or $\Cplx$. Then $L^1(G)$ (and hence each of the algebras $\PF_p(G)$ for $1<p<\infty$) fails to be directly finite.
\end{thm}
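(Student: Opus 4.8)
The plan is to exhibit, inside the unitization $\fu{L^1(G)}$, a left-invertible element that is not right-invertible. Concretely, I would look for $u\in\fu{L^1(G)}$ together with a left inverse $v\in\fu{L^1(G)}$, so that $vu=1$, for which the operator $\lm_2(u)$ on $L^2(G)$ is a proper isometry (bounded below, with $\lm_2(u)^*\lm_2(u)=1$, but not surjective). Once such a pair is produced, direct finiteness fails for $L^1(G)$: the element $v$ is a genuine left inverse lying in the algebra, while $u$ can have no right inverse, since any identity $uw=1$ would force $\lm_2(u)$ to be surjective. The natural candidate is a $*$-isometry, i.e. $u=1+f$ with $u^*u=1$ and $v=u^*$ for a suitable $f\in L^1(G)$, the defect $uu^*\neq 1$ being detected in the faithful regular representation.

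To find $f$, I would pass to the representation theory of the affine group, which is governed by a single infinite-dimensional irreducible representation $\pi$ (two of them in the two-component real case, one in the complex case). Writing $G=N\rtimes H$ with $N$ the translation subgroup and $H$ the dilations, a Fourier transform in $N$ followed by a Mellin transform in $H$ realizes $\pi(\fu{L^1(G)})$ as an algebra of Wiener--Hopf operators on $L^2(0,\infty)$: the translation variable contributes multiplication by the symbol and the dilation variable contributes the compression to the half-line, with the modular function $\Delta$ of the non-unimodular group $G$ entering the symbol. In this picture the failure of direct finiteness is exactly the familiar phenomenon that a Wiener--Hopf (or Toeplitz) operator with nonvanishing symbol of nonzero index (winding number $-1$ in the real case) is bounded below but not onto.

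The construction therefore reduces to choosing $f\in L^1(G)$ whose associated Wiener--Hopf symbol is nonvanishing with index $-1$ and, simultaneously, for which the isometry identity $u^*u=1$, equivalently $f+f^{*}+f^{*}*f=0$, holds \emph{exactly} at the level of $L^1$. This is the step I expect to be the main obstacle. The point is that direct finiteness is an algebraic property of $\fu{L^1(G)}$ itself, so it does not suffice to produce a non-unitary isometry in the $\Cst$-completion $\PF_2(G)=\Cst_{\rm r}(G)$, where its existence is clear from the Wiener--Hopf structure, $\Cst_{\rm r}(G)$ being an extension of a commutative $\Cst$-algebra by the compacts; one must realize both $u$ and its left inverse by honest $L^1$-functions and verify $vu=1$ in $L^1$-norm. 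This is precisely what the cited results from the literature supply, via an explicit Wiener--Hopf factorization of an $L^1$-symbol, and the non-unimodularity is essential: for unimodular amenable $G$, Theorem~\ref{t:PFp_amenable} already forbids any such index obstruction.

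Finally, the parenthetical assertion for $\PF_p(G)$ follows without extra work. The canonical norm-decreasing homomorphism $\lm_p:L^1(G)\to\PF_p(G)$ carries $vu=1$ to $\lm_p(v)\lm_p(u)=1$, so $\lm_p(u)$ is left-invertible in $\fu{\PF_p(G)}$; and the same Wiener--Hopf operator, now acting on $L^p(0,\infty)$, remains bounded below but not surjective, its Fredholm index being independent of $p$, so $\lm_p(u)$ is not right-invertible. Hence $\PF_p(G)$ also fails to be directly finite for every $1<p<\infty$.
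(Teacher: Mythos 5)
There is a genuine gap at the step you yourself flag as the main obstacle. Your plan requires an element $u=\id-f$ with $f\in L^1(G)$ satisfying the isometry identity $u^*u=\id$ \emph{exactly} in $\fu{L^1(G)}$, so that $v=u^*$ serves as the left inverse, and you assert that "the cited results from the literature supply" such an exact Wiener--Hopf factorization at the $L^1$-level. They do not. What Diep (for $\Raff$) and Rosenberg (for $\Caff$) actually produce is an $f\in L^1(G)$ and a faithful representation $S$ (quasi-equivalent to the regular representation) such that $I-S(f)$ is injective with closed range of codimension one --- a purely operator-theoretic Fredholm statement in $\Bdd(\cH_S)$. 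No identity of the form $f+f^*+f^**f=0$ in $L^1(G)$ is obtained, and it is far from clear that any proper isometry of the shape $\id-f$, $f\in L^1(G)$, exists at all. So as written, the proposal establishes left-invertibility only in the $\Cst$-completion, which, as you correctly note, is not enough; the mechanism you invoke to fix this is not available in the sources.

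The paper bridges this gap by a different idea: one does not need the left inverse to be $u^*$. Since $I-S(f)$ is bounded below, the operator $I-S(f^*\qm f)=(I-S(f))^*(I-S(f))$ is invertible in $\Bdd(\cH_S)$. Now $\Raff$ and $\Caff$ are Hermitian (a theorem of Leptin) and amenable (being solvable), so Barnes' spectral permanence result applies to the self-adjoint element $f^*\qm f$: its spectrum relative to $\fu{L^1(G)}$ equals its spectrum in $\Bdd(\cH_S)$. Hence $f^*\qm f$ is quasi-invertible already in $L^1(G)$, say with quasi-inverse $k$, and then $k\qm f^*$ is a left quasi-inverse for $f$ \emph{inside} $L^1(G)$; on the other hand $f$ can have no right quasi-inverse, since that would force $I-S(f)$ to be surjective. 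This Hermitian/spectral-permanence step is exactly the missing ingredient in your argument, and without it (or a genuine exact $L^1$-factorization, which you would have to construct from scratch) the proof does not close. A minor further point: for the parenthetical claim about $\PF_p(G)$, no Fredholm theory on $L^p(0,\infty)$ is needed, and your claim that the index is independent of $p$ is itself unproved; it suffices that $L^1(G)\to\PF_p(G)$ is an injective homomorphism, since an injective homomorphism carries a witnessing pair $(a,b)$ with $a\qm b=0\neq b\qm a$ to a witnessing pair in the target algebra.
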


\begin{rem}
The key to the proof of Theorem~\ref{t:PF2_unimodular} is the existence of a densely-defined, faithful trace on $\Cst_r(G)$ when $G$ is unimodular. Such a trace no longer exists when $G$ is the affine group of $\Real$ or $\Cplx$. However, $\VN(G)$ is in both cases \emph{semifinite}, and therefore has an \emph{ultraweakly-densely-defined} faithful trace. This should be borne in mind when we come to the proof of Theorem~\ref{t:PF2_unimodular}.
\end{rem}

Theorem~\ref{t:L1-not-df} can be derived quickly by combining some known results scattered through the literature.
Namely, one combines a theorem of Leptin, which says that the inclusion $L^1(G)\to\Cst_r(G)$ is spectrum-preserving when $G$ is either of the groups described above, with calculations of Diep and of Rosenberg that were used in \emph{classifying} $\Cst_r(G)$ for certain solvable Lie groups~$G$.
Since Theorem~\ref{t:L1-not-df} seems to have gone unrecorded, as far as the author was able to find, and since the results used in its proof may be of independent interest to researchers in abstract harmonic analysis and Banach algebras, some details are included in an expository section.

Let us briefly describe the structure of this paper. We set up some preliminary notation and observations on direct finiteness in Section~\ref{s:df-BA}, with emphasis on how to formulate the concept in a way that works equally well for rings with or without identities. In Section~\ref{s:df-cstar} we specialize to the setting of $\Cst$-algebras equipped with densely-defined faithful traces, and show how the ideas of \cite{Mon_dirfin} can be extended beyond the setting of discrete groups to yield a proof of Theorem~\ref{t:PF2_unimodular}. Section~\ref{s:PFp-is-qdf} initiates a study of direct finiteness for the algebras $\PF_p(G)$ in the cases $p\neq 2$, and gives the proof of Theorems~\ref{t:PFp_amenable} and~\ref{t:PFp_KS}. Section~\ref{s:not-qdf} is essentially expository, but shows how Theorem~\ref{t:L1-not-df} is proved; and in the final section we pose some questions for future work. In an appendix we show how to extract from two dense papers of Herz the bare minimum needed for the proof of Theorem~\ref{t:PFp_amenable}.

One aim of this paper is to give yet another demonstration that fairly simple operator-algebraic techniques can be used profitably in abstract harmonic analysis. We have attempted to make it accessible to those working in Banach algebras or non-abelian harmonic analysis who, like the author, are not operator algebraists. In several places this has meant repeating some material which is well-known to specialists, in order to provide extra background, or to make the article more self-contained, or to provide references to sources where the reader can find actual proofs (as opposed to assertions or references to proofs). We hope that the sacrifice of brevity for exposition will not trouble the reader unduly.
\end{section}

\begin{section}{Directly finite Banach algebras}
\label{s:df-BA}
Throughout, we adopt the convention that a ring need not be commutative, nor contain an identity element.
A ring $R$ with identity is said to be \dt{directly finite} if each left-invertible element of $R$ is invertible. Motivated by examples from semigroup theory, Munn \cite{Munn_DF1} generalized this definition to the setting of rings without identity. It is convenient to present Munn's definition using the notions of left and right quasi-inverses.

\begin{dfn}
Let $R$ be a ring. Given $a,b\in R$, let $a\qm b\defeq a+b-ab$. If $a\qm b=0$ then we say that $a$ is a \dt{left quasi-inverse} for $b$ and $b$ is a \dt{right quasi-inverse} for $a$\/. An element which has both a left and a right quasi-inverse is said to be \dt{quasi-invertible}.
\end{dfn}

Of course, if $R$ has an identity element $\id$, then
\begin{equation}\label{eq:dictionary}
\id-a\qm b = (\id-a)(\id-b).
\end{equation}
and it is clear from this, or directly from the definition, that $a\qm 0=a = 0\qm a$ for all $a\in A$.
Moreover, the operation $\qm$ is associative: one could check this by a direct calculation, but it is more instructive to adjoin a formal identity $\id$ and observe, using \eqref{eq:dictionary} repeatedly, that
\[ \begin{aligned}
\id-(a\qm b)\qm c = (\id-a\qm b)(\id-c)
 & = (\id-a)(\id-b)(\id-c) \\
 & = (\id-a)(\id-b\qm c) = \id-a\qm (b\qm c)
\end{aligned} \] 
 (In the Banach-algebraic setting, a convenient reference for all this is~\cite[\S4]{BonsDunc}.)

\begin{rem}
It is more intuitive to reason with left, right and two-sided invertible elements than with their ``quasi-'' counterparts. On the other hand, the language of quasi-inverses sidesteps the clunky use of conditional unitizations, allowing us to treat unital and non-unital cases together.
\end{rem}

\begin{dfn}\label{d:dirfin}
A ring $R$ is \dt{directly finite}
if any pair $(a,b)\in R\times R$ satisfying $a\qm b= 0$ also satisfies $b\qm a = 0$.
\end{dfn}

\begin{rem}
In \cite{Munn_DF1}, a ring with this property is said to be ``quasidirectly finite''. We have chosen instead to extend the terminology from the unital case to the non-unital one.
\end{rem}

It is clear that if $S$ is a directly finite ring and $\phi:R\to S$ is an injective ring homomorphism, $R$ is directly finite; this holds even if $R$ and $S$ have identity elements and $\phi(\id[A])\neq \id[S]$.

We now discuss unitizations, and from here on switch to algebras rather than rings (just to sidestep the annoyance that the ring-unitization of an algebra is not its algebra-unitization).
Recall that the \dt{forced unitization} of a $k$-algebra $A$, which we shall denote by $\fu{A}$, is defined to be the vector space $A\oplus k$, equipped with the multiplication $(a,\lm)(b,\mu)\defeq (ab+\lm b+ \mu a, \lm\mu)$, and with the identity element $(0,1)$ denoted by~$\id$.

\begin{lem}\label{l:unital}\
\begin{YCnum}
\item Let $A$ be an algebra (with or without identity). Then $A$ is directly finite (in the sense of Definition~\ref{d:dirfin}) if and only if each left-invertible element of $\fu{A}$ is invertible.
\item Let $A$ be an algebra with identity. Then $A$ is directly finite (in the sense of Definition~\ref{d:dirfin}) if and only if each left-invertible element of $A$ is invertible.
\end{YCnum}
\end{lem}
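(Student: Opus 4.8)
The engine of both parts is the identity \eqref{eq:dictionary}, which converts the quasi-product $\qm$ in $A$ into honest multiplication in a unital overalgebra. The plan for (i) is to exploit the affine map $b\mapsto \id-b$, which sends $A$ bijectively onto the slice of $\fu{A}$ consisting of elements with scalar part $1$; under it, \eqref{eq:dictionary} reads $a\qm b=0 \iff (\id-a)(\id-b)=\id$. Thus a right quasi-inverse $b$ of $a$ in $A$ is exactly a genuine one-sided inverse of $\id-b$ against $\id-a$ in $\fu{A}$, and direct finiteness of $A$ becomes precisely the assertion that, for elements of this slice, a left inverse is automatically a two-sided inverse.

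For the ``only if'' direction of (i) I would start from a left-invertible $x\in\fu{A}$. First I would note that the augmentation homomorphism $\fu{A}\to k$, $(a,\lambda)\mapsto\lambda$, forces the scalar part $\lambda$ of $x$ to be a unit of the field $k$; rescaling by $\lambda^{-1}$ (which preserves both left-invertibility and invertibility) reduces to the case $x=\id-b$ with scalar part $1$. A left inverse then also has scalar part $1$, by the same augmentation argument, say $\id-a$, so $(\id-a)(\id-b)=\id$, i.e.\ $a\qm b=0$; direct finiteness gives $b\qm a=0$, i.e.\ $(\id-b)(\id-a)=\id$, exhibiting $\id-a$ as a right inverse as well, so $x$ is invertible. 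The ``if'' direction is the same computation run backwards: given $a\qm b=0$ the element $\id-b$ is left-invertible, hence invertible by hypothesis, and its two-sided inverse must coincide with its left inverse $\id-a$, yielding $b\qm a=0$.

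For (ii) I would rerun the argument of (i) verbatim but with the genuine identity $\id[A]$ of $A$ in place of the adjoined $\id$: the map $b\mapsto \id[A]-b$ is a bijection of $A$ onto itself, \eqref{eq:dictionary} holds with $\id[A]$ (being valid in any unital algebra), and there is now no need for the rescaling step, since every computation stays inside $A$. Alternatively, and perhaps more slickly, I would observe that when $A$ is unital the element $(\id[A],0)$ is a central idempotent of $\fu{A}$, giving an algebra isomorphism $\fu{A}\iso A\oplus k$; since every nonzero scalar is invertible, a left-invertible element of $\fu{A}$ is invertible if and only if its $A$-component is, and (ii) then drops out of (i).

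The only genuinely non-routine point I foresee is the scalar-part observation in (i): a general left-invertible element of $\fu{A}$ need not a priori lie in the scalar-part-$1$ slice on which the dictionary \eqref{eq:dictionary} operates, so one must first argue, via the augmentation map and the fact that $k$ is a field, that its scalar part is invertible, and then rescale. Everything else is a direct transcription through \eqref{eq:dictionary}.
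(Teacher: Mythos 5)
Your proof is correct and is exactly the argument the paper has in mind: the paper omits the proof, stating only that the lemma ``follows easily from the identity \eqref{eq:dictionary}'', and your translation of quasi-inverses into one-sided inverses via $b\mapsto\id-b$, together with the augmentation/rescaling step needed to handle elements of $\fu{A}$ with scalar part $\neq 1$, fills in precisely those details. The central-idempotent decomposition $\fu{A}\iso A\oplus k$ you offer for part (ii) is a clean bonus but not a departure from the paper's intended route.
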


The lemma follows easily from the identity \eqref{eq:dictionary}; we omit the proof.

In a unital Banach algebra, the group of invertible elements is open in the norm topology. There is an analogous result for quasi-inverses, see \cite[Theorem 4.8]{BonsDunc}; we shall require a slightly more precise version.

\begin{lem}\label{l:stable}
Let $A$ be a Banach algebra and let $c\in A$. Suppose there exists $b\in A$ with $b\qm c=0$. Then, for each $c'\in A$ that is sufficiently close to $c$, there exists $a\in A$ such that $a \qm b\qm c' = 0$.
\end{lem}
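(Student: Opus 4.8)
The plan is to pass to the forced unitization $\fu{A}$ and exploit the dictionary identity \eqref{eq:dictionary} together with the associativity of $\qm$. Since $b\qm c=0$, identity \eqref{eq:dictionary} gives $(\id-b)(\id-c)=\id$ in $\fu{A}$. What I must produce is an element $a\in A$ with $a\qm(b\qm c')=0$, which by \eqref{eq:dictionary} and associativity of $\qm$ is equivalent to $(\id-a)\bigl(\id-(b\qm c')\bigr)=\id$; that is, I need a left inverse of $\id-(b\qm c')$ in $\fu{A}$ that has the special form $\id-a$ with $a\in A$.

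The first concrete step is to identify $\id-(b\qm c')$ explicitly. Using \eqref{eq:dictionary} and then splitting off $\id-c$, I would write $\id-(b\qm c')=(\id-b)(\id-c')=(\id-b)(\id-c)+(\id-b)(c-c')=\id-(\id-b)(c'-c)$, where the relation $(\id-b)(\id-c)=\id$ has been used. Setting $w\defeq(\id-b)(c'-c)$, this says precisely that $b\qm c'=w$, and $\norm{w}\le\norm{\id-b}\,\norm{c'-c}$. Hence, whenever $c'$ is close enough to $c$ (quantitatively, $\norm{c'-c}<\norm{\id-b}^{-1}$, which is harmless since $\norm{\id-b}\ge 1$), one has $\norm{w}<1$, so $\id-w$ is invertible in $\fu{A}$ via the Neumann series $\sum_{n\ge 0}w^n$.

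The only point needing genuine care — and the step I would flag as the main obstacle — is that the resulting left quasi-inverse must lie in $A$ rather than merely in $\fu{A}$. Here I would invoke that $A$ is a closed two-sided ideal of $\fu{A}$: since $c'-c\in A$, also $w=(\id-b)(c'-c)\in A$, so each $w^{n}$ with $n\ge 1$ lies in $A$, and the tail $\sum_{n\ge 1}w^{n}$ converges in the closed subspace $A$. Thus $(\id-w)^{-1}=\id-a$ with $a\defeq-\sum_{n\ge 1}w^{n}\in A$. With this $a$ one has $(\id-a)\bigl(\id-(b\qm c')\bigr)=(\id-w)^{-1}(\id-w)=\id$, which unwinds via \eqref{eq:dictionary} and associativity to $a\qm b\qm c'=0$, as required. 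In effect the lemma is the openness of the quasi-invertible elements (compare \cite[Theorem 4.8]{BonsDunc}); the ``more precise'' content is simply that the ideal structure plus the explicit Neumann series keep the quasi-inverse inside $A$ and exhibit it as a left quasi-inverse.
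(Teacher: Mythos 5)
Your proposal is correct and follows essentially the same route as the paper's proof: pass to $\fu{A}$, observe that $\id-(b\qm c') = \id - (\id-b)(c'-c)$ is invertible by a Neumann series when $\norm{c'-c}$ is small, and note that the series (starting at $n\geq 1$) lies in $A$ because $A$ is a closed ideal of $\fu{A}$. Your element $w=(\id-b)(c'-c)$ is exactly the paper's $\id-u$, and your closeness bound agrees with the paper's $\delta=(1+\norm{b})^{-1}$ under the usual unitization norm.
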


\begin{proof}
Give the unitization $\fu{A}$ its usual norm, so that $\norm{\id}=1$.
We have $(\id-b)(\id-c)=\id$ in $\fu{A}$. Let $\delta=(1+\norm{b})^{-1}>0$ and suppose $c'\in A$ satisfies $\norm{c'-c} < \delta$\/.
Put $u \defeq (\id-b)(\id-c') \in \fu{A}$.
Since $\norm{\id-u} = \norm{ (\id-b)(c'-c)} <1$, $u$ is invertible in the Banach algebra $\fu{A}$, and $a \defeq \id-u^{-1} = - \sum_{n\geq 1} (\id-u)^n$
lies in $A$. By construction, $\id-u= b\qm c'$, so that $a\qm b\qm c'=0$ as required.
\end{proof}

The following proposition will be important in the next section, where we only apply it to certain $2$-sided ideals, and also in Section~\ref{s:PFp-is-qdf}, where we really do need to consider left ideals as well.
The author does not know if the same result remains true under the weaker hypothesis that $J$ is merely a dense \emph{subalgebra} of~$A$.

\begin{prop}\label{p:qdf-ideal}
Let $A$ be a Banach algebra, and let $J$ be a \emph{left} ideal in $A$ which is dense for the norm topology. Then $J$ is \qdf\ if and only if $A$~is.
\end{prop}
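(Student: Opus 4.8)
The plan is to prove the two implications separately, with essentially all the work in the ``$J$ directly finite $\Rightarrow$ $A$ directly finite'' direction. For the easy direction, note first that a left ideal is automatically a subalgebra ($JJ\subseteq AJ\subseteq J$), so the inclusion $J\hookrightarrow A$ is an injective algebra homomorphism; hence, as already observed, if $A$ is directly finite then so is its subalgebra $J$. For the converse, suppose $J$ is directly finite and that $b,c\in A$ satisfy $b\qm c=0$, equivalently $(\id-b)(\id-c)=\id$ in $\fu{A}$. The goal is to deduce $c\qm b=0$, equivalently $(\id-c)(\id-b)=\id$, for which it suffices to show that $\id-b$ is invertible in $\fu{A}$.

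The conceptual crux, which I also expect to be the main obstacle, is that one cannot simply restrict the relation $b\qm c=0$ to $J$: the left quasi-inverse $b$ of $c$ need not lie in $J$. My plan to get around this is to perturb $c$ (rather than $b$) into the dense ideal $J$, and then exploit the left-ideal structure. Concretely, I would choose $c'\in J$ close enough to $c$ that Lemma~\ref{l:stable} applies, producing $a\in A$ with $a\qm b\qm c'=0$; in unitized form this says $(\id-a)(\id-b)(\id-c')=\id$, so, writing $s\defeq(\id-a)(\id-b)\in\fu{A}$, the element $\id-c'$ has the left inverse $s$.

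The key step, and the only place the left-ideal hypothesis enters, is to observe that $s$ has the special form $\id-p$ with $p\in J$. Indeed, rearranging $s(\id-c')=\id$ gives $\id-s=-sc'$, and since $J$ is a left ideal of $\fu{A}$ and $c'\in J$, the element $p\defeq -sc'=\id-s$ lies in $J$. Thus $(\id-p)(\id-c')=\id$, i.e.\ $p\qm c'=0$ with \emph{both} $p$ and $c'$ in $J$. Now direct finiteness of $J$ applies and yields $c'\qm p=0$, so $\id-c'$ is also right-invertible and hence invertible in $\fu{A}$. Climbing back up, I would cancel the invertible factor $\id-c'$ in $(\id-a)(\id-b)(\id-c')=\id$ to see that $(\id-a)(\id-b)$ is invertible, so $\id-b$ is left-invertible; combined with the original right-invertibility $(\id-b)(\id-c)=\id$, this forces $\id-b$ to be invertible, whence $(\id-c)(\id-b)=\id$ and $c\qm b=0$.

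I expect the analytic ingredients (density of $J$ together with Lemma~\ref{l:stable}) to be routine once the strategy is set; the genuine obstacle is recognizing that perturbing $c$ into $J$ produces a left inverse of $\id-c'$ of the form $\id-p$ with $p\in J$. This is exactly what allows the problem to descend into $J$, and it is also the reason the argument seems to require $J$ to be a left ideal rather than merely a dense subalgebra: one needs $sc'\in J$ to conclude $p\in J$.
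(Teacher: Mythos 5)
Your proof is correct and follows essentially the same route as the paper: your element $p = \id - (\id-a)(\id-b) = a\qm b$ is exactly the paper's $b'$, your identity $p = -sc' \in J$ is precisely the paper's observation $b' = b'c'-c' \in J$ via the left-ideal property, and both arguments finish by applying direct finiteness of $J$ to the pair $(p,c')$ and combining the resulting left (quasi-)invertibility of $b$ with the given right (quasi-)invertibility. The only difference is cosmetic: you work multiplicatively in $\fu{A}$, while the paper phrases the same steps in quasi-inverse notation.
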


\begin{proof}
Clearly, if $A$ is \qdf\ then so is $J$. Conversely, suppose $J$ is \qdf, and let $b,c\in A$ satisfy $b\qm c=0$.
By Lemma~\ref{l:stable} and density of $J$ in $A$, we can find $c'\in J$ and $a\in A$ such that $a\qm b \qm c' = 0$.

Put $b'=a\qm b$; then since $b'\qm c'=0$, we have $b' = b'c'- c' \in J$, as $c'\in J$ and $J$ is a left ideal. Since $J$ is \qdf, $c'\qm b'=0$, that is, $c'\qm a\qm b=0$. So $b$ has a left quasi-inverse in $A$, and as $b\qm c=0$ we conclude that $c\qm b=0$ as required.
\end{proof}

\end{section}

\begin{section}{Directly finite $\Cst$-algebras, via densely-defined traces}
\label{s:df-cstar}
We recall some standard $\Cst$-algebraic terminology from \cite[\S5]{Ped_cstar-book}, just to fix the notation.
If $A$ is a $\Cst$-algebra, $A_+$ will denote its cone of positive elements. A~\dt{weight on $A_+$} is a function $\phi: A^+\to [0,\infty]$ that is $\Real_+$-linear and additive.
$\phi$~is \dt{faithful} if $\phi(x)>0$ for all $x\in A_+\setminus\{0\}$; it is
\dt{tracial} if it satisfies $\phi(u^*au)=\phi(a)$ for all $a\in A_+$ and all unitary $u\in \fu{A}$.

Given a tracial weight $\tau$ on $A_+$, there exists a $2$-sided $*$-ideal $A^\tau \subseteq A$,
and a linear tracial functional $A^\tau\to \Cplx$ which coincides with $\tau$ on $A^\tau \cap A^+$; by abuse of notation, we will denote this functional also by~$\tau$. $A^\tau$~is called the \dt{ideal of definition of the trace~$\tau$.}
Moreover, the set 
\[ A^\tau_2\defeq \{x \in A : \tau(xx^*) <\infty\}\]
has the same norm-closure in $A$ as does~$A^\tau$.
(See \cite[\S5.1]{Ped_cstar-book} for the details.)

\begin{prop}\label{p:df-cstar}
Let $A$ be a $\Cst$-algebra and $\tau$ a faithful tracial weight~on $A_+$. Let $B$ denote the norm-closure of $A^\tau$ inside $A$. Then $B$ is \qdf.
\end{prop}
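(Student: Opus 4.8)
The plan is to combine Proposition~\ref{p:qdf-ideal} with a trace argument on idempotents, transplanting Kaplansky's observation (and its reworking in \cite{Mon_dirfin}) from a finite trace to the present semifinite setting. Since $B$ is by definition the norm-closure of the two-sided ideal $A^\tau$, and $A^\tau$ is in particular a dense left ideal of the Banach algebra $B$, Proposition~\ref{p:qdf-ideal} reduces the claim to showing that $A^\tau$ is directly finite. So I would fix $a,b\in A^\tau$ with $a\qm b=0$ and try to deduce $b\qm a=0$.

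Working in the forced unitization, the hypothesis $a\qm b=0$ says $(\id-a)(\id-b)=\id$, and the desired conclusion $b\qm a=0$ says $(\id-b)(\id-a)=\id$. Put $f\defeq b\qm a=\id-(\id-b)(\id-a)$. Since $(\id-a)(\id-b)=\id$, the element $(\id-b)(\id-a)$ satisfies $\bigl[(\id-b)(\id-a)\bigr]^2=(\id-b)(\id-a)$, so $f$ is an idempotent. Using $a\qm b=0$, i.e.\ $a+b=ab$, one computes $f=a+b-ba=ab-ba$. As $a,b\in A^\tau$ and $A^\tau$ is an ideal, the products $ab,ba$ again lie in $A^\tau$, where $\tau$ is a genuine linear tracial functional; hence $\tau(f)=\tau(ab)-\tau(ba)=0$. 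Thus the problem comes down to a single assertion: \emph{an idempotent $f\in A^\tau$ with $\tau(f)=0$ must be zero}. Note that $f\in A^\tau\subseteq A^\tau_2$, so $\tau(f^*f)<\infty$ and all the trace manipulations below are legitimate.

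For this last step I would pass to the von Neumann algebra $M=\pi_\tau(A)''$ generated by $A$ in the GNS representation of $\tau$, on which $\tau$ extends to a faithful normal semifinite trace $\tilde\tau$ that agrees with $\tau$ on $A^\tau$ (see \cite[\S5.1]{Ped_cstar-book}). Let $p\in M$ be the range projection of the bounded idempotent $f$. The standard identities $pf=f$ and $fp=p$, combined with the trace property, give
\[ \tilde\tau(p)=\tilde\tau(fp)=\tilde\tau(pf)=\tilde\tau(f)=0, \]
and faithfulness of $\tilde\tau$ then forces $p=0$, whence $f=pf=0$. This gives $b\qm a=0$, so $A^\tau$---and therefore $B$---is directly finite. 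The delicate point, and the step I expect to be the main obstacle, is exactly this passage to $M$: one must know that the densely-defined faithful trace $\tau$ lifts to a genuinely \emph{faithful} normal semifinite trace on the generated von Neumann algebra (so that a projection of trace zero is null), and that the range projection of $f$, which need not belong to $A$ itself, can be formed inside $M$ with $\tilde\tau(p)=\tau(f)$. By comparison, the reduction via Proposition~\ref{p:qdf-ideal} and the algebraic identity $f=ab-ba$ are routine.
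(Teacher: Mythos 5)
Your first two steps coincide exactly with the paper's proof: the reduction via Proposition~\ref{p:qdf-ideal} to showing that $A^\tau$ is \qdf, and the observation that $f\defeq b\qm a=ab-ba$ is an idempotent in $A^\tau$ with $\tau(f)=\tau(ab)-\tau(ba)=0$, are both correct. The gap is at the point you yourself flag as the main obstacle, and it is genuine: the assertion that a faithful tracial weight which is merely \emph{norm-densely defined} extends to a faithful normal semifinite trace $\tilde\tau$ on $M=\pi_\tau(A)''$ agreeing with $\tau$ on $A^\tau$ is not in \cite[\S5.1]{Ped_cstar-book} (that section only produces the ideal of definition and the tracial functional on it), and in the generality of the Proposition it is false. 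The paper's definition of a weight imposes no lower semicontinuity, and without it no normal extension need exist. Concretely, let $A=c_0$, fix a free ultrafilter $\mathcal{U}$ on the integers, and set $\tau(g)\defeq\sum_n g(n)+\lim_{\mathcal{U}}n^2g(n)$ for positive $g$. This is a faithful, densely defined (trivially tracial) weight, so $B=c_0$; the element $g(n)=n^{-2}$ lies in $A^\tau$ together with its truncations $g_k=g\chi_{[1,k]}$, and $g_k\to g$ in norm, yet $\tau(g_k)=\sum_{n\le k}n^{-2}\to\pi^2/6$ while $\tau(g)=\pi^2/6+1$. Thus $\tau$ restricted to $A^\tau$ is not norm-lower-semicontinuous; since $\tilde\tau\circ\pi_\tau$ \emph{is} automatically norm-lower-semicontinuous for any normal weight $\tilde\tau$ on $M$, no normal trace on $M$ can agree with $\tau$ on $A^\tau$. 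Consequently the chain $\tilde\tau(p)=\tilde\tau(fp)=\tilde\tau(pf)=\tilde\tau(f)=\tau(f)=0$ has no justification: you do not know that $\pi_\tau(f)$ lies in the ideal of definition of any faithful normal semifinite trace on $M$, which is what both the traciality step and the final appeal to faithfulness require. (Even the weaker statement that would suffice --- the existence of \emph{some} faithful normal semifinite trace on $M$ whose ideal $\{x\st\tilde\tau(x^*x)<\infty\}$ contains $\pi_\tau(a)$ and $\pi_\tau(b)$ --- is left unproved, and the natural construction of such a trace, via a Hilbert algebra structure on $A^\tau_2$, breaks down for the same reason: without lower semicontinuity $A^\tau$ need not be $\tau$-norm dense in $A^\tau_2$, as the example above also shows.)

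The paper closes exactly this hole by never leaving the $\Cst$-algebra: Lemma~\ref{l:idempotent-folklore} (Kaplansky's trick $e=pp^*(\id+(p-p^*)(p^*-p))^{-1}$) produces a \emph{hermitian} idempotent $e$ in $B$ with $ep=p$ and $pe=e$; since $A^\tau$ is an ideal in $B$, this $e$ lies in $A^\tau$, i.e.\ in the domain of the given trace, and then $\tau(e^*e)=\tau(e)=\tau(pe)=\tau(ep)=\tau(p)=0$ together with faithfulness of the weight on positive elements gives $e=0$, hence $p=ep=0$. In other words, Kaplansky's $e$ plays precisely the role of your range projection, but is constructed algebraically so that it lands inside the ideal where $\tau$ is defined --- no GNS construction, extension theorem, or normality is needed. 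Your route can be repaired by adding the hypothesis that $\tau$ is lower semicontinuous (the Hilbert algebra and left von Neumann algebra machinery then applies, and this would cover every application made in the paper), but as written it does not prove the Proposition in the stated generality.
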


When $A$ is unital and $A^\tau= A=B$, this result is known, and gives a quick proof that the group von Neumann algebra of a discrete group is directly finite (see~\cite{Mon_dirfin}) which does not require classification of projections.
The author is unaware of a reference which explicitly states Proposition~\ref{p:df-cstar} in the generality given here, so a full proof will be given below.
The key observation is the following standard result about $\Cst$-algebras.

\begin{lem}\label{l:idempotent-folklore}
If $p$ is an idempotent in a $\Cst$-algebra, there exists a hermitian idempotent $e$ in that algebra which satisfies $ep=p$ and $pe=e$.
\end{lem}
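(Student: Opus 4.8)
The plan is to write down $e$ explicitly, exploiting the classical fact that in a $\Cst$-algebra every idempotent is similar to a hermitian idempotent by a positive invertible element that commutes with it. Working inside the unitization $\fu{A}$ when $A$ is non-unital, I would set
\[ z \defeq \id + (p-p^*)^*(p-p^*) = \id - (p-p^*)^2 . \]
Since $p-p^*$ is skew-hermitian, $z$ is hermitian with $z\geq\id$; in particular $z$ is positive and invertible in $\fu{A}$.

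The crucial step is the commutation identity $zp = pz = pp^*p$. I would verify this by a direct expansion, using only $p^2=p$ and its consequence $(p^*)^2=p^*$: one finds $(p-p^*)(p^*-p) = pp^* + p^*p - p - p^*$, and multiplying this on the left or on the right by $p$ collapses everything to $pp^*p$. Taking adjoints and using that $z$ is hermitian then yields $zp^*=p^*z$ as well. Hence $z$, and therefore $z^{-1}$ (which is a norm-limit of polynomials in $z$ by continuous functional calculus), lies in the commutant of $\{p,p^*\}$, and in particular commutes with $pp^*$.

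With this in hand I would define $e \defeq pp^*z^{-1}$ and check the required properties. Hermiticity of $e$ is immediate, since $z^{-1}$ is hermitian and commutes with $pp^*$. For idempotency, the identity $pp^*p=zp$ gives $pp^*pp^*=zpp^*$, so that $e^2 = pp^*pp^*z^{-2} = zpp^*z^{-2} = pp^*z^{-1} = e$. Finally $ep = (pp^*p)z^{-1} = pz\,z^{-1} = p$, while $pe = p^2p^*z^{-1} = pp^*z^{-1} = e$. Since $pp^*\in A$ and $A$ is a closed two-sided ideal of $\fu{A}$, the element $e$ lands back in $A$.

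I expect the only genuine obstacle to be the commutation identity $zp = pp^*p = pz$; everything afterwards is formal manipulation resting on the fact that $z^{-1}$ inherits the commutation behaviour of $z$. A minor point to keep track of is the passage to the unitization and the verification that the constructed $e$ indeed belongs to the original algebra rather than merely to $\fu{A}$.
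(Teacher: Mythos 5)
Your proof is correct and rests on exactly the same Ansatz as the paper: your $z \defeq \id - (p-p^*)^2$ equals $\id + (p-p^*)(p^*-p)$, so your $e = pp^*z^{-1}$ is precisely the element in the paper's displayed formula $(\dagger)$, and your algebra (the commutation identity $zp = pz = pp^*p$, hence $z^{-1}$ commuting with $p$, $p^*$, $pp^*$) checks out. The only divergence is in how the required properties are verified: you carry out the direct computation that the paper explicitly mentions as possible but sets aside, whereas the paper verifies $(\dagger)$ spatially, via a faithful representation and the $2\times 2$ block-matrix form of $p$ relative to $\ran(p)\oplus\ran(p)^\perp$.
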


When discussing this work with other researchers, the author has found Lemma~\ref{l:idempotent-folklore} is not universally known. For convenience, we give an outline of the standard proof.

 \begin{proof}[Proof of Lemma~\ref{l:idempotent-folklore}]
After adjoining an identity element to our algebra if necessary, we make the {\it Ansatz}
\begin{equation}\label{eq:rabbit}
\tag{$\dagger$} e= pp^*(\id+(p-p^*)(p^* - p))^{-1}\,,
\end{equation}
which clearly satisfies $e=pe$. It remains to check that $ep=p$ and that $e$ is an idempotent. One could verify this directly, but a more intuitive approach is to take a faithful representation of our (unital) $\Cst$-algebra on a Hilbert space~$H$, and regard $p$ as a $2\times 2$ operator matrix
$\left(\begin{matrix} I & R \\ 0 & 0 \end{matrix} \right)$
with respect to the decomposition $H=\ran(p)\oplus \ran(p)^\perp$. Computing the right-hand side of \eqref{eq:rabbit} with respect to this block-matrix decomposition gives
$\left(\begin{matrix} I & 0 \\ 0 & 0 \end{matrix} \right)$, which is the orthogonal projection of $H$ onto $\ran(p)$.
\end{proof}

\begin{rem}
A similar but less spatial approach, taken by Kaplansky himself in \cite{Kap_AJM53}, is to exploit the theory of polynomial identities; the author thanks J. Meyer \cite{MO16944} for bringing this to his attention.
\end{rem}

\begin{proof}[Proof of Proposition~\ref{p:df-cstar}]
By Proposition~\ref{p:qdf-ideal}, it suffices to show that $A^\tau$ is \qdf.
Let $a,b\in A^\tau$ be such that $a\qm b=0$\/, and let $p \defeq b\qm a = ab-ba\in A^\tau$\/. We must show $p=0$.
Observe that $p^2=p$, since
\[ 2p-p^2 = p\qm p = b\qm a\qm b \qm a = b\qm 0 \qm a = p \,.\]
By Lemma~\ref{l:idempotent-folklore}, there is a self-adjoint idempotent $e\in B$ such that $pe=e$ and $ep=p$\/; in particular, $e\in A^\tau$, since $A^\tau$ is a (right) ideal in~$B$. Since $\tau$ is tracial,
\[ \tau(e^*e)= \tau(e) = \tau(pe) = \tau(ep)=\tau(p)=\tau(ab)-\tau(ba)=0\,.\]
As $\tau$ is faithful, this forces $e=0$. Hence $p=0$ as required.
\end{proof}

\medskip
Proposition~\ref{p:df-cstar} applies, in particular, to the norm-completions of Hilbert algebras.
Let us recall some of the terminology for ease of reference. A \dt{Hilbert algebra} is an associative, complex $*$-algebra~$\fA$,
equipped with an inner product $\ip{\cdot}{\cdot}$ that satisfies certain compatibility conditions: see \cite[ch.~I\S5, d\'efn~1]{Dix_VN_fr2} or~\cite[Definition 11.7.1]{Palmer2}.
Completing $\fA$ in the norm $x\mapsto \ip{x}{x}^{1/2}$ yields a Hilbert space $\cH$.
Given $a\in \fA$ we denote by $U_a:\cH\to \cH$ the unique operator satisfying $U_a(x)=ax$ for all $x\in\fA$; the map $\lambda:a\mapsto U_a$ is an injective $*$-homomorphism from $\fA$ into $\Bdd(\cH)$.%
(See \cite[\S11.7]{Palmer2} for a summary of these results, or \cite[ch.~I,\S\S5--6]{Dix_VN_fr2} for full details and proofs.)

The closure of $\lambda(\fA)$ in the weak operator topology of $\Bdd(\cH)$ is denoted by $\cU(\fA)$, and called the \dt{left von Neumann algebra assoc\-iated to~$\fA$}.
A fundamental result in the theory of Hilbert algebras tells us there is a faithful tracial weight $\phi$ on $\cU(\fA)_+$, such that
\begin{equation}\label{eq:bounded}
\phi(U_a^*U_a)<\infty \qquad\text{ for all $a\in\fA$.}  \tag{$*$}
\end{equation}
(See \cite[ch.~I,\S6]{Dix_VN_fr2}, proposition 1 and th\'eor\`eme~1 for the proof.)

\begin{cor}\label{c:hilbert-algebra-is-df}
Let $\fA$ be a Hilbert algebra, and let
$B$ be the norm-closure of~$\lm(\fA)$ inside $\Bdd(\cH)$.
 Then $B$ is \qdf.
\end{cor}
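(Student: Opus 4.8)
The plan is to realize $B$ as a closed $*$-subalgebra of a $\Cst$-algebra to which Proposition~\ref{p:df-cstar} directly applies, and then to transfer direct finiteness from the larger algebra down to $B$. Concretely, I would take $A=\cU(\fA)$, the left von Neumann algebra associated to $\fA$. Being a von Neumann algebra, $A$ is in particular a $\Cst$-algebra, and the theory of Hilbert algebras furnishes a faithful tracial weight $\phi$ on $A_+$ satisfying \eqref{eq:bounded}. Thus $A$ and $\phi$ meet exactly the hypotheses of Proposition~\ref{p:df-cstar}, which tells us that the norm-closure of the ideal of definition $A^\phi$ inside $A$ is \qdf.

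It then remains to fit $B$ inside this directly finite algebra. First I would check that $\lm(\fA)\subseteq A^\phi_2$. Fix $a\in\fA$; applying \eqref{eq:bounded} to $a$ gives $\phi(U_a^*U_a)<\infty$, while applying it to $a^*\in\fA$ gives $\phi(U_{a^*}^*U_{a^*})=\phi(U_aU_a^*)<\infty$, since $\lm$ is a $*$-homomorphism and hence $U_{a^*}=U_a^*$. The second inequality is precisely the condition $U_a\in A^\phi_2$. Taking norm-closures, and using the fact recorded just before Proposition~\ref{p:df-cstar} that $A^\phi_2$ and $A^\phi$ have the same norm-closure in $A$, we obtain
\[ B=\overline{\lm(\fA)}\subseteq\overline{A^\phi_2}=\overline{A^\phi}, \]
where all closures are taken for the operator norm.

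Finally, $B$ is a closed $*$-subalgebra of $\overline{A^\phi}$, and the inclusion map is an injective homomorphism; so the observation recorded after Definition~\ref{d:dirfin}, that direct finiteness is inherited by subalgebras through injective homomorphisms, shows that $B$ is \qdf. I expect the main subtlety to lie not in this last step but in matching conventions in the middle one: one must be sure that the ``square-integrability'' condition \eqref{eq:bounded} coming from the Hilbert-algebra literature really does place $\lm(\fA)$ inside the set $A^\phi_2$ as defined here, which is why I would invoke \eqref{eq:bounded} for both $a$ and $a^*$ rather than appeal to the identity $\phi(xx^*)=\phi(x^*x)$. Routing the argument through the injective-homomorphism principle also has the pleasant effect of sidestepping any need to prove the reverse inclusion $\overline{A^\phi}\subseteq B$, which would require identifying $A^\phi_2$ with the left-bounded elements of $\cH$.
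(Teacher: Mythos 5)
Your proof is correct, and it runs on the same engine as the paper's: Proposition~\ref{p:df-cstar} applied to the canonical faithful tracial weight $\phi$ of Hilbert-algebra theory, together with condition \eqref{eq:bounded}. The difference is where the proposition is applied. The paper takes $A=B$ itself: it restricts $\phi$ to $B_+$ to obtain a faithful tracial weight $\tau$ on $B_+$, notes that \eqref{eq:bounded} places $\lm(\fA)$ inside $B^\tau_2$, so that $B^\tau_2$ (and hence $B^\tau$) is dense in $B$, and concludes that $B$, being the norm-closure of its own ideal of definition, is \qdf. You instead apply the proposition upstairs with $A=\cU(\fA)$, deduce that the closure of $A^\phi$ is \qdf, show $B\subseteq\overline{A^\phi_2}=\overline{A^\phi}$, and finish with the injective-homomorphism principle recorded after Definition~\ref{d:dirfin}. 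What your route buys: you never need to verify that the restriction of $\phi$ to $B_+$ remains a faithful \emph{tracial} weight in the paper's sense (unitary invariance relative to $\fu{B}$), a point the paper passes over in silence; what it costs is the extra (harmless) appeal to heredity of direct finiteness under injective homomorphisms. Your handling of the membership $U_a\in A^\phi_2$ is also more careful than the paper's: since $A^\phi_2$ is defined by the condition $\phi(xx^*)<\infty$ while \eqref{eq:bounded} bounds $\phi(U_a^*U_a)$, applying \eqref{eq:bounded} to $a^*$ and using $U_{a^*}=U_a^*$ closes a small gap that the paper's one-line assertion $\lm(\fA)\subseteq B^\tau_2$ leaves implicit (the alternative being the identity $\phi(xx^*)=\phi(x^*x)$ for tracial weights, which the paper never states).
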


\begin{proof}
Let $\tau$ be the restriction of the weight $\phi$ to $B_+$; then $\tau$ is a faithful tracial weight on $B_+$, and in the notation used at the start of this section, \eqref{eq:bounded} tells us that $\lm(\fA)\subseteq B^\tau_2$. Hence $B^\tau_2$ is dense in~$B$, and by our earlier general remarks on tracial weights, this implies $B^\tau$ is dense in $B$. Applying Proposition~\ref{p:df-cstar} completes the proof.
\end{proof}

We can now prove Theorem~\ref{t:PF2_unimodular}. 
A standard example of a Hilbert algebra is given by the space $C_c(G)$ of continuous, compactly supported functions on~$G$, equipped with convolution as product, when $G$ is \emph{unimodular}.
(For full details of the Hilbert algebra structure, see~\cite[chapitre 1, \S5, exercice~5]{Dix_VN_fr2} and the ensuing hints, or \cite[\S13.10]{Dix_C*_en}, or \cite[Example 11.7.2]{Palmer2}.) The associated Hilbert space $\cH$ is just $L^2(G)$; the representation $\lm: \fA\to \Bdd(\cH)$ is the left regular representation $\lm_2$ defined earlier; 
and the norm-closure of $\lm(\fA)$ inside $\Bdd(\cH)$ is~$\Cst_r(G)$.
Therefore Theorem~\ref{t:PF2_unimodular} follows as a special case of Corollary~\ref{c:hilbert-algebra-is-df}.

\begin{rem}
We proved Theorem~\ref{t:PF2_unimodular} by using the existence of a densely-defined faithful trace on $\Cst_r(G)$, but we appealed to rather general results to produce this trace. It may be instructive to have a more explicit description of what is going on.  Let $\FA(G)$ be the \dt{Fourier algebra} of~$G$, and define
\[ J = \{ f\in \FA(G) \st h\mapsto f*h \text{ is bounded from $C_c(G)$ to $L^2(G)$}\}, \]
which we might describe somewhat loosely as $\FA(G)\cap \VN(G)$. The faithful trace $\tau$ can then be described on $J$ as $\tau(f)=f(e)$, where $e$ is the identity element of the group; this is the \dt{Plancherel weight} on~$\VN(G)$.
Finally, $J$ contains $\FA(G)\cap C_c(G)$, so $J\cap \Cst_r(G)$ is the desired dense ideal in $\Cst_r(G)$. (In general $J\not\subseteq \Cst_r(G)$; for instance, if $G=\Real$ then the Fourier transform intertwines $J$ with $(L^1\cap L^\infty)(\widehat{\Real})$ and $\Cst_r(\Real)$ with $C_0(\widehat{\Real})$.)
\end{rem}

It is natural to ask if the converse of Theorem~\ref{t:PF2_unimodular} holds. The following examples show it does not, although in some sense they constitute a `cheat answer'.

\begin{eg}[A certain family of solvable Lie groups]\label{eg:cheat}
Let $p$, $q$ be strictly positive integers, and let $\al=(\al_1,\dots,\al_{p+q})$ where each $\al_j$ is a strictly positive real number. Denote by $G(p,q,\al)$ the semidirect product $\Real^{p+q} \rtimes \Real$, where the action of $\Real$ on $\Real^{p+q}$ is given by
\[ t \mapsto \operatorname{diag}(e^{\al_1 t},\dots, e^{\al_p t},
	e^{-\al_{p+1} t}, \dots, e^{-\al_{p+q} t}) \]
In general, $G(p,q,\al)$ is not unimodular. However, 
the isomorphism class of $\Cst(G(p,q,\al))$ depends only on $p$ and $q$ and not on $\al$. (See \cite[pp.~12--13]{Wang_pitman199} and \cite[pp.~190--191]{Ros_Pac76}.) 
Moreover, if we happen to choose $\al$ such that 
 $\al_1+\dots+\al_p=\al_{p+1}+\dots+\al_{p+q}$, then $G(p,q,\al)$ \emph{is} unimodular; see~\cite[p.~190]{Ros_Pac76}.
Therefore, by  Theorem~\ref{t:PF2_unimodular}, $\Cst_r(G(p,q,\al))$ is \qdf\ even when $G(p,q,\al)$ is non-unimodular.
\end{eg}

\begin{rem}\label{r:not-SOT-closure}
In general, if $\fA$ is a Hilbert algebra, $\cU(\fA)$ need not be directly finite.
For instance, take $\fA$ to be the $*$-algebra of finite rank operators on $\ell^2$, equipped with the Hilbert algebra structure defined by the bilinear map $(S,T) \mapsto \operatorname{tr}(TS)$.
Then the associated Hilbert space can be identified with $\ell^2$, and $\cU(\fA)=\Bdd(\ell^2)$, which is clearly not \qdf\ (just look at any non-unitary isometry on $\ell^2$).
If we want examples of the form $C_c(G)$, then as mentioned in the introduction, we could take $G$ to be the real Heisenberg group or any semisimple connected Lie group; in all such cases $\VN(G)$ will contain an isomorphic copy of $\Bdd(\ell^2)$, but since $G$ is unimodular $\Cst_r(G)$ will be directly finite.
\end{rem}

\end{section}

\begin{section}{Some groups for which $\PF_p(G)$ is \qdf}
\label{s:PFp-is-qdf}
We start with a quick review of the algebras $\PF_p(G)$.
Let $G$ be a locally compact group, for now not necessarily unimodular, and as before regard $C_c(G)$ as an algebra with product given by convolution.
For each $p\in (1,\infty)$, define a homomorphism $\lm_p:C_c(G) \to \Bdd(L^p(G))$ by
\[ \lm_p(f)(h) = f*h \qquad(\text{$f\in C_c(G)$, $h\in L^p(G)$}). \]
The norm-closure of $\lm_p(C_c(G))$ inside $\Bdd(L^p(G))$ is denoted by $\PF_p(G)$ and called the \dt{algebra of $p$-pseudo\-functions on~$G$}. Note that $\PF_2(G)$ is nothing but the reduced group $\Cst$-algebra of~$G$.

We have deliberately avoided introducing general convolution operators on $L^p(G)$ and how one represents them by (possibly infinite) Radon measures on~$G$, just to keep the technicalities to a minimum.
Further references and additional details are given in the monograph~\cite{Der_book}, although some of the basic properties are stated without full proofs.

\begin{rem}\label{r:half-suffices}
Let $G$ be a unimodular, locally compact group.
Let $p$ and $q$ be conjugate indices, strictly between $1$ and $\infty$. The operator $\Bdd(L^p(G)) \to \Bdd(L^q(G))$, $T\mapsto T^*$, is an anti-isomorphism from $\PF_p(G)$ onto $\PF_q(G)$. It follows that $\PF_q(G)$ is \qdf\ if and only if $\PF_p(G)$~is.
\end{rem}

One would like to generalize Theorem~\ref{t:PF2_unimodular} to cover $\PF_p(G)$ for $G$ unimodular and all $p\in (1,\infty)$, or to find a unimodular example for which $\PF_p(G)$ is not directly finite for some $p\neq 2$. This section presents some partial results in the positive direction.
We start by considering the case where $G$ is amenable.
The following theorem is a special case of results of Herz.

\begin{thm}[Herz]\label{t:pfp-in-pf2}
Let $G$ be a locally compact, amenable group. Regard $\lm_p$ as a homomorphism from $C_c(G)$ to $\PF_p(G)$. Then there is an injective homomorphism
$J:\PF_p(G)\to\PF_2(G)$
 such that  $\lm_2= J\lm_p$.
\end{thm}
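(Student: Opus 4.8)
The goal is to construct an injective homomorphism $J:\PF_p(G)\to\PF_2(G)$ intertwining the regular representations, i.e.\ $\lm_2=J\lm_p$. The natural plan is to produce $J$ by continuous extension from the dense image $\lm_p(C_c(G))$. Concretely, one wants to define $J$ on $\lm_p(f)$ by $J(\lm_p(f))\defeq \lm_2(f)$ for $f\in C_c(G)$, and then show this is well-defined and extends to a bounded (in fact contractive) homomorphism on all of $\PF_p(G)$. For this to make sense one needs: first, that $\lm_p(f)=\lm_p(g)$ forces $\lm_2(f)=\lm_2(g)$ (well-definedness on the dense image), which reduces to the operator-norm estimate $\norm{\lm_2(f)}_{\Bdd(L^2)} \le C\norm{\lm_p(f)}_{\Bdd(L^p)}$ for all $f\in C_c(G)$; and second, that this estimate holds with $C=1$ so the extension is a genuine contraction and the intertwining identity passes to the closure.

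The heart of the matter, and the step I expect to be the main obstacle, is precisely this norm comparison $\norm{\lm_2(f)}\le\norm{\lm_p(f)}$, which is exactly where amenability of $G$ enters. The plan is to invoke Herz's work, where this is established via the theory of $A_p(G)$-multipliers and the ``Herz restriction/interpolation'' machinery: for an amenable group the regular representation $\lm_p$ dominates $\lm_2$ in operator norm, essentially because amenability allows one to realize the $L^2$-action as a limit of compressions of the $L^p$-action (or, dually, because $B_2(G)\subseteq B_p(G)$ with norm control). Since the excerpt labels this as ``a special case of results of Herz'' and promises an appendix extracting ``the bare minimum needed,'' I would cite that appendix for the key inequality rather than reprove it, treating the multiplier-norm comparison as the black box the whole construction rests on.

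Granting the inequality, the remaining steps are routine. First, well-definedness: if $\lm_p(f)=\lm_p(g)$ then $\norm{\lm_2(f-g)}\le \norm{\lm_p(f-g)}=0$, so $\lm_2(f)=\lm_2(g)$, and the assignment $\lm_p(f)\mapsto \lm_2(f)$ is an unambiguous map on $\lm_p(C_c(G))$. It is clearly linear and multiplicative there, since both $\lm_p$ and $\lm_2$ are algebra homomorphisms of $C_c(G)$ and agree on the level of convolution. By the norm estimate it is contractive, hence extends uniquely by continuity to a bounded homomorphism $J:\PF_p(G)\to\PF_2(G)$ on the closure; the intertwining relation $\lm_2=J\lm_p$ holds on $C_c(G)$ by construction and thus everywhere relevant.

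The last thing to verify is \emph{injectivity} of $J$, and here the symmetric point is that the same domination holds for the conjugate index, giving a reverse comparison. Using the duality of Remark~\ref{r:half-suffices} together with Herz's results one obtains that $\lm_p(f)$ and $\lm_2(f)$ have \emph{comparable} norms (both inequalities, for amenable $G$), so $J$ is bounded below and hence injective; alternatively, one argues that $J$ carries the dense subalgebra $\lm_p(C_c(G))$ bijectively onto $\lm_2(C_c(G))$ via an inverse built from the reverse estimate, and a map that is isometric-below on a dense set and bounded is injective on the completion. The delicate part throughout remains Herz's two-sided norm comparison under amenability; once that is cited from the appendix, the construction of $J$ and its properties follow formally.
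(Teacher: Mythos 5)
Your construction and boundedness of $J$ are fine and essentially match the paper's route: the whole theorem does reduce to the inequality $\norm{\lm_2(f)}\leq\norm{\lm_p(f)}$ for $f\in C_c(G)$, which is exactly what the paper extracts from Herz in its appendix (there it is obtained by dualizing a contractive inclusion $\imath:\FA_2(G)\to\FA_p(G)$, whose existence uses Herz's module theorem for $\FA_p(G)$ over $\FA_2(G)$ together with amenability). The genuine gap is your injectivity argument. There is no ``reverse comparison'': the estimate $\norm{\lm_p(f)}\leq C\norm{\lm_2(f)}$ is false for general amenable $G$ when $p\neq 2$. Passing to the conjugate index $q$ only yields $\norm{\lm_2(f)}\leq\norm{\lm_q(f)}$, i.e.\ another inequality in the \emph{same} direction, not a lower bound for $J$ (and Remark~\ref{r:half-suffices} moreover assumes unimodularity, which is not among the hypotheses here). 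If the two norms were comparable on $C_c(G)$, then $J$ would be a Banach-algebra isomorphism of $\PF_p(G)$ onto $\PF_2(G)$; already for $G=\mathbb{Z}$ or $G=\mathbb{T}$ with $p\neq 2$ this fails, since the $p$-convolution (multiplier) norm is not equivalent to the supremum norm of the Fourier transform. So $J$ is injective but \emph{not} bounded below, and no argument resting on a lower bound, or on an ``inverse built from the reverse estimate,'' can be repaired.

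The correct, and softer, way to get injectivity --- which is what the paper does --- is by duality from \emph{dense range} rather than from norm estimates. One realizes $\PF_p(G)$ isometrically inside $\FA_p(G)^*$ via a map $\phi_p$ compatible with the inclusion $\PF_p(G)\subseteq\Bdd(L^p(G))$; the Herz inclusion $\imath:\FA_2(G)\to\FA_p(G)$ has dense range, because both algebras contain the common subalgebra $\cA_c(G)=\theta(C_c(G)\tp C_c(G))$ densely; hence $\imath^*:\FA_p(G)^*\to\FA_2(G)^*$ is injective. One then checks $\phi_2 J=\imath^*\phi_p$ on the dense subalgebra $\lm_p(C_c(G))$ and extends by continuity, exhibiting $\phi_2 J$ as a composition of injective maps, so $J$ is injective. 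Your first three paragraphs can stand (with the key inequality cited as you propose); the final paragraph should be replaced by this duality argument.
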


Combining Theorem~\ref{t:pfp-in-pf2} with Theorem~\ref{t:PF2_unimodular}, the following is immediate.

\begin{cor}\label{c:PF_p-via-Herz}
Let $G$ be an amenable, unimodular, locally compact group. Then $\PF_p(G)$ is \qdf, for every $1<p<\infty$.
\end{cor}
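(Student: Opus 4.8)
The plan is to combine the two named ingredients directly, since the hard analytic work has already been done in Theorem~\ref{t:PF2_unimodular} and Theorem~\ref{t:pfp-in-pf2} (Herz's embedding). The corollary should fall out almost immediately from the observation, made already in Section~\ref{s:df-BA}, that direct finiteness passes to subalgebras under injective homomorphisms.

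First I would record the setup. Let $G$ be amenable, unimodular, and locally compact, and fix $p\in(1,\infty)$. By Theorem~\ref{t:PF2_unimodular}, the reduced group $\Cst$-algebra $\PF_2(G)=\Cst_r(G)$ is \qdf, since $G$ is unimodular. By Theorem~\ref{t:pfp-in-pf2} (Herz), amenability of $G$ furnishes an injective homomorphism $J:\PF_p(G)\to\PF_2(G)$.

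Next I would invoke the general principle, stated in the paragraph following Remark after Definition~\ref{d:dirfin}, that if $S$ is a directly finite ring and $\phi:R\to S$ is an injective ring homomorphism, then $R$ is directly finite. Applying this with $R=\PF_p(G)$, $S=\PF_2(G)$, and $\phi=J$, we conclude that $\PF_p(G)$ is \qdf. Since $p\in(1,\infty)$ was arbitrary, the conclusion holds for every such $p$.

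I do not anticipate a genuine obstacle at this level, since the substance lives in the two cited theorems. The one point worth checking is that $J$ is a ring (equivalently, algebra) homomorphism and not merely a bounded linear injection, so that the subalgebra principle applies; this is exactly what Theorem~\ref{t:pfp-in-pf2} asserts, via the intertwining relation $\lm_2=J\lm_p$ on the dense range $\lm_p(C_c(G))$, which forces multiplicativity of $J$ by continuity. One should also note that the subalgebra principle requires no compatibility of identity elements even when the algebras happen to be unital, so no special care is needed regarding units. With these remarks the corollary is immediate.
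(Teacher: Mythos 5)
Your proposal is correct and is exactly the paper's argument: the paper states the corollary as ``immediate'' from combining Theorem~\ref{t:pfp-in-pf2} with Theorem~\ref{t:PF2_unimodular}, via the inheritance principle for injective ring homomorphisms recorded in Section~\ref{s:df-BA}. Your additional check that $J$ is multiplicative (and that no compatibility of units is needed) is a sensible point of care, and it matches what the paper establishes in its proof of Theorem~\ref{t:pfp-in-pf2}.
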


\begin{rem}\label{r:what-Herz-did}
Let $\CV_p(G)$ denote the subalgebra of $\Bdd(L^p(G))$ consisting of all operators that commute with right translations.
Herz's full result, proved by combining results in \cite{Herz_p-space} and \cite{Herz_AIF73}, says that when $G$ is amenable
there is a unital embedding $\CV_p(G)\to\VN(G)$
 that extends the homomorphism $\lm_2:C_c(G)\to \VN(G)$.
Clearly this contains Theorem~\ref{t:pfp-in-pf2} as a special case. However, extracting a complete proof of the general result from these dense papers requires some work, since the necessary results are intertwined with other technically demanding results that are superfluous in the present context.
Therefore, in Appendix~\ref{app:Herz-by-hand}, we have given a quick account of those results from Herz's papers needed to prove Theorem~\ref{t:pfp-in-pf2}.
\end{rem}


Now we change direction and turn away from amenable groups.
\begin{dfn}
\label{d:Kunze--Stein}
A locally compact group $G$ is said to have the~\dt{Kunze--Stein property}, or to be a \dt{Kunze--Stein group}, if for each $1\leq p < 2$
there exists a constant $C_p\geq 1$ such that
\begin{equation}\label{eq:KS-ineq}
\norm{g*h}_2 \leq C_p \norm{g}_p \norm{h}_2 \quad\text{for all $g\in L^p(G)$ and all $h\in C_c(G)$.}\end{equation}
\end{dfn}

\begin{eg}[Examples with the Kunze--Stein property]
\
\begin{YCnum}
\item Let $G$ be a connected semisimple Lie group with finite centre, such as $\SL(n,\Real)$. Cowling~\cite{Cowling_KS78} proved that $G$ has the Kunze--Stein property.
(The particular case $n=2$ was established by Kunze and Stein in \cite{KunzeStein_AJM60}.)
\item Let $T$ be a homogeneous tree of order $\geq 3$, and equip $\Aut(T)$ with the topology of pointwise convergence; this makes it into a locally compact, totally disconnected group. Let $G$ be a closed subgroup of $\Aut(T)$ which acts transitively on the boundary~$\partial T$. (For instance, when ${\mathbb K}$ is a local field, the group $\SL(2,{\mathbb K})$ is of this form.) Then $G$ has the Kunze--Stein property~\cite{Nebbia_KS88}.
\end{YCnum}
\end{eg}

It is also remarked in \cite{Cowling_KS78} that the only amenable groups with the Kunze--Stein property are the compact ones.
In view of this, the following result is a somewhat surprising counterpart to Corollary~\ref{c:PF_p-via-Herz}.

\begin{thm}\label{t:PFp(KS)}
Let $G$ be unimodular and Kunze--Stein. Then $\PF_p(G)$ is \qdf.
\end{thm}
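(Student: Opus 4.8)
The plan is to mimic the structure of the amenable case (Corollary~\ref{c:PF_p-via-Herz}), namely to exhibit an injective homomorphism from $\PF_p(G)$ into the already-known-to-be-directly-finite algebra $\PF_2(G)=\Cst_r(G)$, and then invoke the general principle recorded just after Definition~\ref{d:dirfin}: direct finiteness passes to subalgebras via injective homomorphisms. By Remark~\ref{r:half-suffices} it suffices to treat $1<p<2$, since the case $p>2$ then follows by duality.

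First I would use the Kunze--Stein inequality \eqref{eq:KS-ineq} directly. For $f\in C_c(G)$ and $h\in C_c(G)$, the inequality gives $\norm{f*h}_2 \leq C_p\norm{f}_p\norm{h}_2$, and taking the supremum over $h$ in the unit ball of $L^2(G)$ shows that $\norm{\lm_2(f)}_{\Bdd(L^2(G))} \leq C_p\norm{f}_p$. The idea is that this provides exactly the bound needed to compare the two operator norms on $C_c(G)$. The more natural comparison, however, is between the $\PF_p$-norm and the $\PF_2$-norm rather than between $\norm{\cdot}_p$ and the $\PF_2$-norm; so the key step is to promote \eqref{eq:KS-ineq} to an inequality of the form $\norm{\lm_2(f)}\leq C\norm{\lm_p(f)}$ valid for all $f\in C_c(G)$, which would let one define $J$ on the dense subalgebra $\lm_p(C_c(G))$ by $J(\lm_p(f))\defeq\lm_2(f)$ and extend by continuity.

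The hard part will be establishing that comparison of the $\PF_p$-norm with the $\PF_2$-norm, and then checking \emph{injectivity} of the resulting map. For injectivity I would argue as follows: the assignment $\lm_p(f)\mapsto\lm_2(f)$ is well-defined and norm-decreasing (after rescaling), hence extends to a bounded homomorphism $J:\PF_p(G)\to\PF_2(G)$ satisfying $\lm_2=J\lm_p$; injectivity then reduces to showing that an element of $\PF_p(G)$ annihilated by $J$ must vanish, which one expects to follow because both algebras are faithfully represented on function spaces built from $C_c(G)$ and $J$ intertwines the two representations on the common dense subspace $C_c(G)$. An alternative and perhaps cleaner route, paralleling Herz's embedding theorem more closely, is to realize $\PF_p(G)$ as a subalgebra of $\CV_p(G)$ and to use the Kunze--Stein estimate to map convolution operators bounded on $L^p$ to convolution operators bounded on $L^2$, the injectivity being inherited from the fact that both are determined by their action on $C_c(G)$.

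Once the injective homomorphism $J:\PF_p(G)\to\PF_2(G)$ is in hand, the theorem is immediate: $\PF_2(G)=\Cst_r(G)$ is directly finite by Theorem~\ref{t:PF2_unimodular} (using unimodularity of $G$), and direct finiteness is inherited by $\PF_p(G)$ through the injective homomorphism $J$, as noted after Definition~\ref{d:dirfin}. I expect the single genuine obstacle to be the passage from the mixed-norm estimate \eqref{eq:KS-ineq} to a clean $\PF_p$-to-$\PF_2$ operator-norm comparison together with the verification of injectivity; everything downstream is formal.
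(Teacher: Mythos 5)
There is a genuine gap, and it sits exactly where you yourself locate the ``single genuine obstacle'': the promotion of the Kunze--Stein inequality \eqref{eq:KS-ineq} to an operator-norm comparison $\norm{\lm_2(f)}\leq C\norm{\lm_p(f)}$ is not something you can establish, and the paper works hard precisely to avoid ever needing it. The inequality \eqref{eq:KS-ineq} controls $\norm{\lm_2(f)}$ by the \emph{function} norm $\norm{f}_p$, and $\norm{f}_p$ is not dominated by the operator norm $\norm{\lm_p(f)}_{p\to p}$: a tall, thin bump $f\geq 0$ with $\norm{f}_1=1$ satisfies $\norm{\lm_p(f)}\leq\norm{f}_1=1$ while $\norm{f}_p$ is as large as you please. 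Worse, the existence of \emph{any} injective homomorphism $J:\PF_p(G)\to\PF_2(G)$ with $J\lm_p=\lm_2$ is exactly Herz's embedding (Theorem~\ref{t:pfp-in-pf2}), and the introduction of the paper points out, citing Lohou\'e \cite{Lohoue_1980}, that amenability is \emph{necessary} for that embedding even among unimodular groups. Since a group that is both Kunze--Stein and amenable must be compact, your plan---in either of its two variants, both of which aim at embedding $\PF_p(G)$ (or $\CV_p(G)$) into $\PF_2(G)$ (or $\CV_2(G)$)---cannot succeed for any of the groups the theorem is really about, the compact ones being already covered by Theorem~\ref{t:PFp_amenable}.

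What the paper does instead is exploit the fact that \eqref{eq:KS-ineq} gives a bounded injective map $\iKS:L^p(G)\to\PF_2(G)$ \emph{from the function space}, and manufacture an object carrying both norms at once: the Banach algebra $X_p(G)$, defined as the closure of the diagonal copy $\{(f,\lm_p(f)) \st f\in C_c(G)\}$ inside $L^p(G)\oplus_1\PF_p(G)$. Composing the first coordinate projection with $\iKS$ yields an injective algebra homomorphism $X_p(G)\to\PF_2(G)$ (Lemma~\ref{l:KS-diagram}), so $X_p(G)$ is directly finite by Theorem~\ref{t:PF2_unimodular}; meanwhile the second coordinate projection maps $X_p(G)$ isomorphically onto a \emph{dense left ideal} of $\PF_p(G)$ (Lemma~\ref{l:dense-ideal-in-PFp}), not onto all of $\PF_p(G)$. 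The indispensable last ingredient is Proposition~\ref{p:qdf-ideal}: direct finiteness transfers between a Banach algebra and any dense one-sided ideal. Your proposal never invokes this proposition, and without it the dense-ideal route cannot be closed; with it, no comparison between the $\PF_p$- and $\PF_2$-operator norms is needed at all. So the correct repair of your argument is not to prove the missing inequality, but to change the domain of your embedding: embed a dense left ideal of $\PF_p(G)$, built from honest functions where Kunze--Stein genuinely applies, into $\PF_2(G)$, and then appeal to Proposition~\ref{p:qdf-ideal}.
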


The proof of Theorem~\ref{t:PFp(KS)} occupies the rest of this section.
The main idea is to define a dense, one-sided ideal in $\PF_p(G)$ which is also a subalgebra of $\PF_2(G)$, and then use 
Theorem~\ref{t:PF2_unimodular}. 
Let $\diag_p: C_c(G) \to \LplusPG{p}$ be the `diagonal' embedding, i.e.~$\diag_p(f) = (f,\lm_p(f))$,
and define $X_p(G)$ to be the norm closure of $\diag_p(C_c(G))$.
Let $\pi_L$ and $\pi_P$ denote the coordinate projections from $\LplusPG{p}$ onto $L^p(G)$ and $\PF_p(G)$ respectively

\begin{lem}
The restrictions of $\pi_L$ and $\pi_P$ to the subspace $X_p(G)$ are both injective.
\end{lem}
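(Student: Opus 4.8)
The plan is to exploit the $\ell^1$-direct sum structure. Since $\LplusPG{p}$ is a Banach space, every element of $X_p(G)$ is a norm limit of a sequence $\diag_p(f_n)=(f_n,\lm_p(f_n))$ with $f_n\in C_c(G)$, and convergence in the $\ell^1$-sum is coordinatewise. Thus a typical element of $X_p(G)$ has the form $(\xi,T)$, where $f_n\to\xi$ in $L^p(G)$ and $\lm_p(f_n)\to T$ in the operator norm of $\PF_p(G)$ for some sequence $(f_n)$. Injectivity of $\pi_L$ (resp.\ $\pi_P$) on $X_p(G)$ then amounts to showing that $\xi=0$ forces $T=0$ (resp.\ that $T=0$ forces $\xi=0$). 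The single tool driving both implications is that, for a fixed $h\in C_c(G)$, the map $f\mapsto f*h$ is continuous from $L^p(G)$ to itself, by Young's inequality $\norm{f*h}_p\le\norm{f}_p\,\norm{h}_1$, while operator-norm convergence $\lm_p(f_n)\to T$ yields $f_n*h=\lm_p(f_n)(h)\to T(h)$ in $L^p(G)$.

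For $\pi_L$ I would argue as follows. Assume $\xi=0$, so that $f_n\to0$ in $L^p(G)$ and $\lm_p(f_n)\to T$. Fixing $h\in C_c(G)$, on one side $\lm_p(f_n)(h)\to T(h)$ in $L^p(G)$, while on the other $f_n*h\to0$ by the continuity just noted; since these two sequences coincide, $T(h)=0$. As $h$ runs over the dense subspace $C_c(G)$ of $L^p(G)$ and $T$ is bounded, this gives $T=0$.

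For $\pi_P$ the first half is symmetric: assuming $T=0$, we obtain $\lm_p(f_n)(h)\to0$ and $f_n*h\to\xi*h$, whence $\xi*h=0$ for every $h\in C_c(G)$. To finish I would run $h$ through a (right) approximate identity $(e_i)\subseteq C_c(G)$ consisting of bumps concentrating at the group identity; the standard fact that $\xi*e_i\to\xi$ in $L^p(G)$ then forces $\xi=\lim_i\xi*e_i=0$. I expect no serious obstacle here: the only mildly delicate points are the two distinct endgames—density of $C_c(G)$ in $L^p(G)$ together with boundedness of $T$ for $\pi_L$, versus the existence of an approximate identity for $\pi_P$—and the harmless observation that unimodularity is not actually needed for this particular lemma.
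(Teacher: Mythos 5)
Your proof is correct and is essentially the paper's own argument written out in full: the paper compresses your two coordinatewise-limit computations into the single statement that $g*h=T(h)$ for $(g,T)\in X_p(G)$ and $h\in C_c(G)$, handles $\pi_L$ by the same density-plus-boundedness observation, and leaves your approximate-identity endgame for $\pi_P$ implicit (it is the ``basic measure theory'' fact invoked elsewhere in the paper). One small caveat: the inequality $\norm{f*h}_p\leq\norm{f}_p\norm{h}_1$ you cite is Young's inequality for \emph{unimodular} groups; for general $G$ and $p>1$ the correct constant is $\int_G\abs{h(y)}\Delta(y)^{-1/q}\,d\mu(y)$ (with $q$ the conjugate index and $\Delta$ the modular function), which is still finite for $h\in C_c(G)$, so the continuity of $f\mapsto f*h$ — and hence your remark that unimodularity is not needed — both survive.
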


\begin{proof}
If $(g,T)\in X_p(G)$ and $h\in C_c(G)$, an easy continuity argument shows that $g*h=T(h) \in L^p(G)$. Moreover, if $S\in \Bdd(L^p(G))$ and $S(h)=0$ for all $h\in C_c(G)$, then $S=0$. The result follows.
\end{proof}

\begin{lem}\label{l:dense-ideal-in-PFp}\
\begin{YCnum}
\item\label{li:left-ideal} Let $S\in \PF_p(G)$ and $(g,T)\in X_p(G)$. Then $(S(g),ST)\in X_p(G)$.
\item\label{li:BA} The bilinear map $X_p(G)\times X_p(G) \to X_p(G)$, defined by
\[
((f,S) , (g,T)) \mapsto (S(g),ST)
\]
is associative, and makes $X_p(G)$ a Banach algebra; moreover, $\pi_P:X_p(G) \to \PF_p(G)$ is an algebra homomorphism with dense range.
\end{YCnum}
\end{lem}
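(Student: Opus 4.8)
The plan is to establish (i) first, since the only substantive content of (ii) is precisely the assertion that the formula $(S(g),ST)$ sends $X_p(G)$ back into $X_p(G)$; granting (i), the remaining claims in (ii) reduce to one norm estimate and some formal algebra. The engine for both parts is the submultiplicative inequality
\[
\norm{(S(g),\, ST)} = \norm{S(g)}_p + \norm{ST} \leq \norm{S}\bigl(\norm{g}_p + \norm{T}\bigr) = \norm{S}\,\norm{(g,T)},
\]
valid for $S\in\PF_p(G)$ and $(g,T)\in\LplusPG{p}$, which follows at once from submultiplicativity of the operator norm and the definition of the $\oplus_1$-norm. In particular the bilinear map $(S,(g,T))\mapsto(S(g),ST)$ on $\PF_p(G)\times\LplusPG{p}$ is bounded (with norm at most $1$), so I may pass to limits freely in either variable.

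For (i), I would first verify the claim on generators: if $S=\lm_p(f)$ and $(g,T)=\diag_p(g)=(g,\lm_p(g))$ with $f,g\in C_c(G)$, then $S(g)=f*g\in C_c(G)$ and $ST=\lm_p(f)\lm_p(g)=\lm_p(f*g)$, so $(S(g),ST)=\diag_p(f*g)\in X_p(G)$. I would then upgrade this in two steps, using continuity together with the fact that $X_p(G)$ is norm-closed. First, fixing $S=\lm_p(f)$, the bounded map $(g,T)\mapsto(S(g),ST)$ carries the dense set $\diag_p(C_c(G))$ into $X_p(G)$, hence carries all of $X_p(G)$ into $X_p(G)$. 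Second, fixing $(g,T)\in X_p(G)$, the bounded map $S\mapsto(S(g),ST)$ then carries the dense set $\lm_p(C_c(G))$ of $\PF_p(G)$ into $X_p(G)$, hence carries all of $\PF_p(G)$ into $X_p(G)$. This yields (i).

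For (ii), well-definedness of the product is exactly (i) applied with $S=\pi_P(f,S)$, and the displayed estimate (now using $\norm{S}\leq\norm{(f,S)}$) shows the product is submultiplicative; as $X_p(G)$ is a closed subspace of a Banach space it is complete, so it is a Banach algebra. Associativity I would check by direct computation: both $\bigl((f,S)(g,T)\bigr)(h,U)$ and $(f,S)\bigl((g,T)(h,U)\bigr)$ evaluate to $\bigl((ST)(h),\,STU\bigr)$, the only point worth noting being the module identity $(ST)(h)=S(T(h))$ used in the first coordinate. Finally $\pi_P\bigl((f,S)(g,T)\bigr)=ST=\pi_P(f,S)\,\pi_P(g,T)$, so $\pi_P$ is a homomorphism, and its range contains $\pi_P(\diag_p(C_c(G)))=\lm_p(C_c(G))$, which is dense in $\PF_p(G)$; hence $\pi_P$ has dense range.

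The one place I would expect to need genuine care is the well-definedness in (i): the product is given by an algebraic formula on the ambient space $\LplusPG{p}$, and the real assertion is that this formula respects the closed subspace $X_p(G)$. The two-step separate-continuity argument is what secures this, and it is the heart of the lemma, even though each individual step is elementary. Everything else — submultiplicativity, associativity, and the homomorphism and dense-range properties of $\pi_P$ — is purely formal once (i) is available.
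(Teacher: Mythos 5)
Your proof is correct and takes essentially the same approach as the paper: check the product on $C_c(G)$-generators, where it is just convolution, and then extend by density and continuity, with part (ii) treated as formal consequences. The only difference is organizational — the paper runs a single joint ``$3\veps$'' sequence argument with $f_n, g_n \in C_c(G)$ simultaneously, whereas you make the norm bound $\norm{(S(g),ST)} \leq \norm{S}\,\norm{(g,T)}$ explicit and split the extension into two separate-continuity steps using closedness of $X_p(G)$.
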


\begin{proof}
For this proof, we denote the usual $L^p$-norm on $C_c(G)$ by $\norm{\cdot}_p$ and the operator norm on $\Bdd(L^p(G))$ by $\norm{\cdot}_{p\to p}$.
Let $(g_n)$ be a sequence in $C_c(G)$ with $\diag(g_n)\to (g,T)$; that is,
$\norm{g_n-g}_p \to 0$ and $\norm{\lm_p(g_n)-T}_{p\to p} \to 0$.
Let $(f_n)$ be a sequence in $C_c(G)$ with $\norm{\lm_p(f_n)-S}_{p\to p} \to 0$. Then $f_n*g_n \in C_c(G)$; and a standard ``$3\veps$ argument'' shows that $\norm{f_n*g_n \to S(g)}_p \to 0$ and $\norm{\lm_p(f_n*g_n) - ST}_{p\to p} \to 0$.
This proves part~\ref{li:left-ideal}.
For part~\ref{li:BA}: associativity can be checked directly, as can the fact that the norm on $X_p(G)$ is submultiplicative. So $X_p(G)$ is a Banach algebra. Clearly $\pi_P:X_p(G) \to \PF_p(G)$ is a homomorphism; it has dense range, since $\pi_p\diag_p=\lm_p$.
\end{proof}

\begin{rem}
Morally speaking, $X_p(G)=\PF_p(G)\cap L^p(G)$; however, as we have defined things, $\PF_p(G)$ is a space of operators on $L^p(G)$ and not a space of functions or measures on $G$. Rather than set up enough machinery to make sense of elements of $L^p(G)$ as densely-defined convolution operators on $L^p(G)$, or to regard elements of $\PF_p(G)$ as certain kinds of distributions on~$G$, it seemed easier here to use ``soft'' tools.
\end{rem}

\begin{lem}\label{l:KS-diagram}
When $G$ is Kunze--Stein, there exist injective, continuous linear maps $\iKS:L^p(G)\to \PM_2(G)$ and $\tillm_{p,2}: X_p(G) \to \PF_2(G)$ making the following diagram commute:
\begin{equation}\label{eq:KS-diagram}
\begin{diagram}[tight,height=3em,width=2.5em]
  & & & & & & L^p(G) \\
  & & & & &\ruTo(6,2)^{\imath_p}   \ruTo_{\pi_L} & \\
C_c(G) & & \rTo_{\diag_p} & & X_p(G) & & \dTo_{\iKS}  \\
  & \rdTo(6,2)_{\lm_2} & & & & \rdTo^{\tillm_{p,2}} & \\
  & & &  & & & \PF_2
\end{diagram}
\end{equation}
Moreover, $\tillm_{p,2}$ is an algebra homomorphism.
\end{lem}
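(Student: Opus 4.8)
The plan is to realise $\iKS$ as left convolution viewed as an operator on $L^2(G)$, using the Kunze--Stein inequality \eqref{eq:KS-ineq} to bound its operator norm, and then to take $\tillm_{p,2}$ to be $\iKS\circ\pi_L$ restricted to $X_p(G)$; the commutativity of \eqref{eq:KS-diagram} and the multiplicativity of $\tillm_{p,2}$ will then follow by density from their validity on $\diag_p(C_c(G))$. Since \eqref{eq:KS-ineq} only has content for $1\le p<2$, I work throughout with $1<p<2$ (the case $p=2$ being trivial, and $p>2$ being reduced to $1<p<2$ by Remark~\ref{r:half-suffices}).

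First I would construct $\iKS$. For $g\in L^p(G)$, inequality \eqref{eq:KS-ineq} says that $h\mapsto g*h$ is bounded from $(C_c(G),\norm{\cdot}_2)$ into $L^2(G)$ with norm at most $C_p\norm{g}_p$; as $C_c(G)$ is dense in $L^2(G)$, it extends uniquely to an operator $\iKS(g)\in\Bdd(L^2(G))$ with $\norm{\iKS(g)}_{2\to2}\le C_p\norm{g}_p$, so $\iKS$ is linear and continuous. For $f\in C_c(G)$ we have $\iKS(f)=\lm_2(f)$ by definition of $\lm_2$; hence, approximating an arbitrary $g\in L^p(G)$ by a sequence $g_n\in C_c(G)$ in $\norm{\cdot}_p$ and using $\norm{\iKS(g_n)-\iKS(g)}_{2\to2}\le C_p\norm{g_n-g}_p$, we see that $\iKS(g)$ is a norm-limit of the elements $\lm_2(g_n)\in\PF_2(G)$. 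As $\PF_2(G)$ is norm-closed, $\iKS$ in fact maps $L^p(G)$ into $\PF_2(G)\subseteq\PM_2(G)$. Injectivity is clear: if $g*h=0$ for every $h\in C_c(G)$ then, testing against a bounded approximate identity in $C_c(G)$, $g=0$.

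Next I would set $\tillm_{p,2}\defeq(\iKS\circ\pi_L)|_{X_p(G)}$, which is continuous as a composite of the bounded projection $\pi_L$ and the bounded map $\iKS$, and which maps into $\PF_2(G)$ by the previous paragraph. It is injective because $\iKS$ is injective and $\pi_L$ restricts to an injection on $X_p(G)$ (as noted just before Lemma~\ref{l:dense-ideal-in-PFp}). Commutativity of \eqref{eq:KS-diagram} then reduces to three identities: the upper triangle $\imath_p=\pi_L\circ\diag_p$ is immediate from the definition of $\diag_p$; the identity $\tillm_{p,2}=\iKS\circ\pi_L$ holds by construction; and $\iKS\circ\imath_p=\lm_2$ on $C_c(G)$ because $\iKS(f)=\lm_2(f)$ there.

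Finally, for multiplicativity, recall from Lemma~\ref{l:dense-ideal-in-PFp} that the product in $X_p(G)$ sends $(\diag_p(f_0),\diag_p(g_0))$ to $\diag_p(f_0*g_0)$. Hence on the dense subalgebra $\diag_p(C_c(G))$ one has
\[
\tillm_{p,2}\bigl(\diag_p(f_0)\,\diag_p(g_0)\bigr)
=\iKS(f_0*g_0)=\lm_2(f_0)\lm_2(g_0),
\]
and the right-hand side is exactly $\tillm_{p,2}(\diag_p(f_0))\,\tillm_{p,2}(\diag_p(g_0))$. Since $(u,v)\mapsto\tillm_{p,2}(uv)$ and $(u,v)\mapsto\tillm_{p,2}(u)\tillm_{p,2}(v)$ are both bounded bilinear maps on $X_p(G)\times X_p(G)$ agreeing on the dense set $\diag_p(C_c(G))\times\diag_p(C_c(G))$, they agree everywhere, so $\tillm_{p,2}$ is an algebra homomorphism. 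The only substantive input is the Kunze--Stein estimate used to define $\iKS$; everything else is density and continuity. The point worth stressing is that this estimate is precisely what forces left convolution by an $L^p$-function to be realised inside the \qdf\ algebra $\PF_2(G)$, rather than merely in $\PM_2(G)=\VN(G)$, and it is this containment that makes Theorem~\ref{t:PF2_unimodular} applicable downstream.
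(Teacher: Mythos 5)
Your proposal is correct and follows essentially the same route as the paper: construct $\iKS$ from the Kunze--Stein estimate by density of $C_c(G)$ in $L^p(G)$ (so that it lands in the norm-closed algebra $\PF_2(G)$), set $\tillm_{p,2}\defeq\iKS\pi_L$, deduce injectivity from that of the two factors, and obtain commutativity and multiplicativity by density and continuity. You merely spell out details the paper leaves terse --- the norm-limit argument placing $\iKS(g)$ in $\PF_2(G)$, the approximate-identity proof of injectivity, and the bilinear density argument for the homomorphism property --- which is a faithful elaboration, not a different method.
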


\begin{proof}
Since $G$ is Kunze--Stein, it follows from \eqref{eq:KS-ineq} that
there exists a bounded linear map $\iKS: L^p(G) \to \PF_2(G)$, which makes the outer triangle in Diagram~\eqref{eq:KS-diagram} commute.
If $f\in \ker\iKS$ then $f*h=0$ for each $h \in C_c(G)$, so by basic measure theory $f=0$ as an element of $L^p(G)$.
Thus $\iKS$ is injective.

Put $\tillm_{p,2} \defeq \iKS\pi_L$; clearly this is continuous and linear, and it is injective since $\iKS$ and $\pi_L$ are. By construction , it makes the
 right-hand inner triangle in \eqref{eq:KS-diagram} commute.
Now the top inner triangle in \eqref{eq:KS-diagram} commutes, by the definitions of $\diag_p$ and $\pi_L$. Hence, by a straightforward diagram chase, the remaining inner triangle commutes.

Finally, since $\tillm_{p,2}\diag_p = \lm_2$ is a homomorphism and $\diag_p$ has dense range, it follows by continuity that $\tillm_{p,2}$ is a homomorphism.
\end{proof}

\begin{proof}[Proof of Theorem~\ref{t:PFp(KS)}]
Let $G$ be unimodular and Kunze--Stein.
For $p=2$, Theorem~\ref{t:PFp(KS)} is a special case of Theorem~\ref{t:PF2_unimodular}. So by Remark~\ref{r:half-suffices}, it suffices to consider the case $1\leq p <2$.

By Lemma~\ref{l:KS-diagram}, $\tillm_{p,2}:X_p(G) \to \PF_2(G)$ is an \emph{injective} algebra homomorphism; thus $X_p(G)$ is \qdf, using Theorem~\ref{t:PF2_unimodular}.
Consider $\pi_P(X_p(G))\subseteq \PF_p(G)$; this is \qdf, since $\pi_P$ is an \emph{injective} algebra homomorphism. On the other hand, by Lemma~\ref{l:dense-ideal-in-PFp}, $\pi_P(X_p(G))$ is a dense left ideal in $\PF_p(G)$.
Therefore, by Proposition~\ref{p:qdf-ideal}, $\PF_p(G)$ is \qdf.
\end{proof}

\end{section}

\begin{section}{Two groups for which $L^1(G)$ is not \qdf}
\label{s:not-qdf}
Let $\bbF$ be either $\Real$ or $\Cplx$, equipped with its usual topology: write $\bbF^\times$ for $\bbF\setminus\{0\}$, regarded as a multiplicative group. The \dt{affine group of~$\bbF$}, denoted by $\Aff(\bbF)$, is defined to be the group
\[ \left\{ \twomat{a}{b}{0}{1} \st a\in \bbF^\times, b\in \bbF \right\} \]
equipped with the natural topology.

In this section we give an exposition of the result stated in Section~\ref{s:intro} as Theorem~\ref{t:L1-not-df}: \textit{neither $L^1(\Raff)$ nor $L^1(\Caff)$ are \qdf}.
By some standard measure-theoretic arguments,
the map $\lm_p:C_c(G)\to \PF_p(G)$ extends to a continuous, \emph{injective} algebra homomorphism $L^1(G)\to \PF_p(G)$.
Theorem~\ref{t:L1-not-df} therefore implies that if $G=\Raff$ or $G=\Caff$, none of the algebras $\PF_p(G)$ are directly finite.

We start with some general definitions. A $*$-algebra $A$ is said to be \dt{symmetric} if $\sigma(x^*x) \subseteq [0,\infty)$ for all $x \in A$, and \dt{hermitian} if $\sigma(h)\subseteq \Real$ for all self-adjoint $h\in A$. (The spectrum is taken relative to $A$ or to $\fu{A}$, depending on whether $A$ has an identity element.)
The two notions coincide for Banach $*$-algebras (see \cite[Theorem 11.4.1]{Palmer2}).
A locally compact group $G$ is said to be \dt{hermitian} if $L^1(G)$, equipped with the usual involution, is a hermitian Banach $*$-algebra. Further background on Hermitian groups can be found in \cite[\S12.6.22]{Palmer2}.

\begin{thm}[Leptin]
$\Raff$ and $\Caff$ are Hermitian.
\end{thm}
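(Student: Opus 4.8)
The goal is to show that $L^1(\Raff)$ and $L^1(\Caff)$ are Hermitian; since Hermitian and symmetric coincide for Banach $*$-algebras (Palmer 11.4.1, quoted above), I may instead verify the symmetric condition $\sigma(x^*x)\subseteq[0,\infty)$ whenever that is more convenient. The structural input I would exploit is that $\Aff(\bbF)=\bbF\rtimes\bbF^\times$ is a semidirect product with closed normal \emph{abelian} subgroup $N=\bbF$ and \emph{abelian} quotient $\bbF^\times$, and that the multiplicative action of $\bbF^\times$ on $\dual N\cong\bbF$ has an extremely simple orbit structure: the closed fixed point $\{0\}$ together with the single open free orbit $\bbF\setminus\{0\}$ (for the connected real $ax+b$ group this open set splits into the two half-lines). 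The plan is to reduce Hermitian-ness to statements about the pieces of this decomposition and then reassemble, following the method of Leptin.

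The easy half is representation-theoretic. Because $N$ is abelian and its dual carries only finitely many, locally closed, orbits, the Mackey machine shows that $\Aff(\bbF)$ is type I and describes $\dual G$ completely: the characters factoring through $\bbF^\times$, together with finitely many infinite-dimensional representations induced from the nontrivial characters of $N$. If $f=f^*\in L^1(G)$, then every $*$-representation carries $f$ to a self-adjoint Hilbert-space operator, so $\sigma_{\Cst(G)}(f)\subseteq\Real$. The difficulty is that this only constrains the $C^*$-spectrum, whereas Hermitian-ness concerns the a priori larger spectrum computed in $L^1(G)$, and these need not agree.

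Hence the substance of the proof is to show that no extra, possibly non-real, spectrum is created on passing from $\Cst(G)$ back to $L^1(G)$. This is exactly where exponential growth bites: neither $\Raff$ nor $\Caff$ has polynomial growth, so Hulanicki's theorem is unavailable and symmetry must be forced by hand. Following Leptin, I would decompose $L^1(G)$ along the orbit structure of $\dual N$. The open free orbit yields a closed two-sided ideal $I\subseteq L^1(G)$ whose enveloping $C^*$-algebra is isomorphic to the algebra $\Cpct$ of compact operators on $L^2(\bbF^\times)$, while the fixed point $\{0\}$ yields the quotient $L^1(\bbF^\times)$, giving an extension $0\to I\to L^1(G)\to L^1(\bbF^\times)\to 0$. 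Each end is symmetric --- the quotient classically, as the $L^1$-algebra of an abelian group, and the ideal because its structure is close to that of $\Cpct$. The crux, and the step I expect to be the main obstacle, is the gluing: symmetry is \emph{not} in general inherited by extensions of symmetric Banach $*$-algebras, so one cannot simply combine the two ends. One must instead invoke Leptin's specific results, which guarantee that when the orbits are locally closed and the stabilizers (here trivial, or all of $\bbF^\times$) have symmetric $L^1$-algebras, symmetry does propagate along the resulting composition series. Verifying the hypotheses of that theorem for the very simple orbit geometry of the affine groups is what completes the argument.
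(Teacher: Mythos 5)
Your proposal takes essentially the same route as the paper: the paper's entire proof of this theorem is a citation of Leptin's slightly more general symmetry result (Satz~6 of the cited Leptin paper), and your argument, after setting up the orbit structure of the dual of the normal subgroup $\bbF$ and the extension $0\to I\to L^1(G)\to L^1(\bbF^\times)\to 0$, likewise delegates the one genuinely hard step --- the propagation of symmetry along that composition series, which as you correctly note fails for general extensions of symmetric Banach $*$-algebras --- to ``Leptin's specific results''. Your surrounding exposition (the Mackey-machine picture, the distinction between the $\Cst$-spectrum and the possibly larger $L^1$-spectrum, and the identification of the gluing step as the crux) is accurate, so the proposal is a sound reduction that matches the paper's citation-based proof rather than a different or self-contained argument.
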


\begin{proof}
This follows from a slightly more general result \cite[Satz 6]{Leptin_SympMath22}.
\end{proof}

Barnes observed that groups which are both hermitian and amenable have the following spectral permanence property.

\begin{prop}[Barnes]\label{p:sp-perm}
Let $G$ be a hermitian, amenable group, and let $\theta: L^1(G) \to \Bdd(\cH)$ be a faithful $*$-representation on some Hilbert space. If $h=h^* \in L^1(G)$, then $\sigma_{L^1(G)}(h) = \sigma_{\Bdd(\cH)}\theta(h)$.
\end{prop}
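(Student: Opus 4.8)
The plan is to route both spectra through the enveloping (full) group $\Cst$-algebra $\Cst(G)$, using hermitianness to pin down the left-hand side and amenability to pin down the right-hand side. Write $\kappa\colon L^1(G)\to\Cst(G)$ for the canonical $*$-homomorphism; it is injective (the left regular representation is already faithful on $L^1(G)$) and has dense range. Both $\sigma_{L^1(G)}(h)$ and $\sigma_{\Bdd(\cH)}(\theta(h))$ are real: the former because $G$ is hermitian and $h=h^*$, the latter because $\theta(h)$ is a self-adjoint operator. One inclusion is free: extending $\theta$ to a unital homomorphism $\fu{L^1(G)}\to\Bdd(\cH)$ and recalling that a homomorphism can only shrink spectra gives $\sigma_{\Bdd(\cH)}(\theta(h))\subseteq\sigma_{L^1(G)}(h)$. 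Everything reduces to the reverse inclusion.

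Step one (hermitianness). I would first establish $\sigma_{L^1(G)}(h)=\sigma_{\Cst(G)}(\kappa(h))$. By the Ptak--Shirali--Ford theory of symmetric Banach $*$-algebras (see \cite[\S11.4]{Palmer2}), since $L^1(G)$ is symmetric the function $\gamma(a)=r_{L^1(G)}(a^*a)^{1/2}$ is a $C^*$-seminorm with $\gamma(a)=\norm{\kappa(a)}_{\Cst(G)}$, and on self-adjoint elements it collapses to the spectral radius. Thus $L^1(G)$ and $\Cst(G)$ assign equal spectral radius to each self-adjoint element; since for such elements the spectrum is real and compact and its extreme points are recovered from the spectral radii of real translates, the two algebras assign the same least and greatest spectral value to every self-adjoint element. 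To obtain the reverse inclusion, take a real $\lambda\notin\sigma_{\Cst(G)}(\kappa(h))$; then $(\kappa(h)-\lambda)^2$ is bounded below, i.e.\ has strictly positive least spectral value in $\Cst(G)$. By symmetry $\sigma_{L^1(G)}((h-\lambda)^2)\subseteq[0,\infty)$, and by the preceding its least spectral value coincides with that in $\Cst(G)$, hence is positive; so $0\notin\sigma_{L^1(G)}((h-\lambda)^2)$, whence $(h-\lambda)^2$, and therefore $h-\lambda$, is invertible in $\fu{L^1(G)}$, i.e.\ $\lambda\notin\sigma_{L^1(G)}(h)$.

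Step two (amenability). It remains to see that $\sigma_{\Cst(G)}(\kappa(h))\subseteq\sigma_{\Bdd(\cH)}(\theta(h))$. As a $*$-representation of $L^1(G)$, $\theta$ extends to a $*$-homomorphism $\widetilde\theta\colon\Cst(G)\to\Bdd(\cH)$ with $\widetilde\theta\kappa=\theta$, and $\theta(h)$ lies in the $\Cst$-subalgebra $\overline{\theta(L^1(G))}$; by spectral permanence for $\Cst$-subalgebras it suffices to prove that $\widetilde\theta$ is \emph{injective}, since an injective $*$-homomorphism of $\Cst$-algebras is isometric and spectrum-preserving. This is the crux, and where amenability is indispensable. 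Faithfulness of $\theta$ only tells us that $\ker\widetilde\theta$ meets $\kappa(L^1(G))$ trivially, which is in general far weaker than $\ker\widetilde\theta=0$: for a free group the regular representation is faithful on $L^1(G)$, yet its extension to $\Cst(G)$ has the nonzero reduced kernel. What rescues us is that, for amenable $G$, the universal and reduced $\Cst$-norms coincide and indeed $L^1(G)$ carries a \emph{unique} $\Cst$-norm; consequently the $\Cst$-norm $f\mapsto\norm{\theta(f)}$ induced by the faithful representation $\theta$ must equal $\norm{\kappa(f)}_{\Cst(G)}$, forcing $\widetilde\theta$ to be isometric, hence injective.

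I expect the main obstacle to be precisely this last point---verifying that for amenable $G$ a representation faithful on $L^1(G)$ stays faithful after extension to $\Cst(G)$---since reconciling faithfulness on the dense subalgebra with injectivity on the completion is exactly the delicate content of Barnes's result. Once it is in hand, Steps one and two combine to give $\sigma_{L^1(G)}(h)=\sigma_{\Cst(G)}(\kappa(h))=\sigma_{\Bdd(\cH)}(\theta(h))$.
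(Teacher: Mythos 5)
Your Step 1 is correct and is exactly the hermitian half of the argument the paper attributes to Barnes: for a hermitian (equivalently symmetric) Banach $*$-algebra, the Ptak function $\gamma(a)=r(a^*a)^{1/2}$ is the enveloping $\Cst$-norm, spectral radii of self-adjoint elements are preserved, and the translate-and-square trick upgrades this to $\sigma_{L^1(G)}(h)=\sigma_{\Cst(G)}(\kappa(h))$. The genuine gap is in Step 2, at precisely the point you yourself flag as ``the main obstacle''. The claim that amenability makes the $\Cst$-norm on $L^1(G)$ unique is asserted, not proved, and it is not a consequence of amenability. What Hulanicki's theorem gives for amenable $G$ is only that the one specific faithful representation $\lambda_2$ induces the full norm, i.e.\ $\Cst_r(G)=\Cst(G)$; it says nothing about an arbitrary faithful $\theta$. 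The property you invoke --- every faithful $*$-representation of $L^1(G)$ extends to a faithful representation of $\Cst(G)$ --- is what the literature calls $\Cst$-uniqueness (Boidol, Leung--Ng), and it is \emph{strictly} stronger than amenability: amenability is necessary for it (since $\norm{\lambda_2(\cdot)}$ is a $\Cst$-norm), but there are amenable, even connected solvable, groups that are not $\Cst$-unique. So Step 2 rests on a false general principle, and the proof does not close.

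A structural symptom of the problem is that your Step 2 never uses the hermitian hypothesis, whereas it must. Indeed, once Step 1 is in place, the conclusion of the Proposition for a given faithful $\theta$ is essentially \emph{equivalent} to injectivity of $\widetilde\theta$: applying the asserted spectral equality to $h-t\id$ in $\fu{L^1(G)}$ for real $t$, and using Step 1 plus density of such elements among self-adjoint elements of $\fu{\Cst(G)}$, forces $\widetilde\theta$ to be isometric. So your reduction identifies the right target, but it also shows the Proposition cannot be obtained more cheaply than that injectivity statement, and amenability alone does not deliver it --- hermitianness has to do real work in this step too. This is exactly what the paper's own (cited) proof supplies: it quotes Barnes's Theorem 6, whose argument combines symmetry of $L^1(G)$ with the Hulanicki theorem for amenable groups, and observes that this combination (not amenability by itself) is what allows $\lambda_2$ to be replaced by an arbitrary faithful $*$-representation. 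To repair your proposal you would need to actually prove, for hermitian amenable $G$, that every nonzero closed ideal of $\Cst(G)$ meets $\kappa(L^1(G))$ --- or else bypass injectivity and transfer spectra directly as Barnes does.
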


\begin{proof}
This is essentially contained in \cite[Theorem 6]{Bar_PEMS90}.
Strictly speaking, the statement of \cite[Theorem 6]{Bar_PEMS90} only gives
$\sigma_{L^1(G)}(h) = \sigma_{\Cst_r(G)} \lambda_2(h)$.
However, examination of the proof, together with the theorem of Hulanicki that is quoted in \cite[p.~329]{Bar_PEMS90}, shows that $\lm_2$ can be replaced by any faithful $*$-representation of $L^1(G)$ on Hilbert space.
(The key point is that since $G$ is amenable, the reduced and full group $\Cst$-algebras coincide.)
\end{proof}

\paragraph{Calculations of Diep and of Rosenberg}
We first treat the case $G=\Raff$, for which our main source is Chapter 3 of the monograph \cite{Diep_book}.
The results originally date from Diep's thesis, and are summarized without full proofs in \cite{Zep_FunkPri74}.

The irreducible, continuous, unitary representations of $\Raff$ were first worked out by Gelfand and Naimark. We focus on one in particular:
let $\cH_S$ be the Hilbert space $L^2(\Real^\times, |x|^{-1}\,dx)$, and define a strongly continuous unitary representation $S: \Raff \to \Bdd(\cH_S)$ by
\[ (S_gf)(x)  = e^{ibx}f(ax) \qquad\text{for $f\in \cH_S$ and $g=\left(\begin{matrix} a & b \\ 0 & 1
    \end{matrix} \right)$.} 
\]
The representation $S$ is quasi-equivalent to the left regular representation, and so in particular the induced $*$-homomorphism $\Cst(\Raff)\to \Bdd(\cH_S)$ is injective.
(See \cite[proof of Lemma~3.3]{Diep_book}.)

\begin{thm}[Diep]
\label{t:Diep_Aff-R}
There exists $f\in L^1(\Raff)$, such that $I-S(f): \cH_S \to \cH_S$ is injective with closed range of codimension one.
\end{thm}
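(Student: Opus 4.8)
The plan is to realise the integrated operator $S(f)$ within the classical Wiener--Hopf (Toeplitz) framework, where the Fredholm index is controlled by a winding number, and then to choose $f$ so that $I-S(f)$ has index $-1$ with trivial kernel. First I would pass to more transparent coordinates: the substitution $x=\pm e^t$ identifies $\cH_S=L^2(\Real^\times,\abs{x}^{-1}\,dx)$ with $L^2(\Real)\oplus L^2(\Real)$, under which the dilation part of $S$ acts by translation while the normal subgroup $\{(1,b)\}$ acts by multiplication by $e^{ibe^t}$. Integrating the latter against $\varphi\in L^1(\Real)$ produces multiplication by $\dual{\varphi}(e^t)$, a continuous function with a finite limit as $t\to-\infty$ and vanishing as $t\to+\infty$. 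Thus the norm-closed algebra $\cA\defeq S(\Cst(\Raff))$ is generated by translations together with multiplication operators having prescribed limits at the two ends, which is exactly the data of a Wiener--Hopf algebra. I would then record the structural input, due to Gelfand--Naimark and made precise by Diep: $S$ is irreducible, $\cA$ contains $\Cpct(\cH_S)$, and there is a short exact sequence $0\to\Cpct(\cH_S)\to\cA\to C_0(\dual{\Real^\times})\to 0$ with commutative quotient.

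Next I would reformulate the conclusion: $I-S(f)$ is injective with closed range of codimension one precisely when it is Fredholm of index $-1$ with trivial kernel. Fredholmness of $I-S(f)$ is equivalent to invertibility of its image in the Calkin algebra, and the quotient map $\cA\to C_0(\dual{\Real^\times})$ computes exactly this image modulo $\Cpct(\cH_S)$; consequently the index of $I-S(f)$ is obtained by applying the connecting map $\partial\colon K_1(\cA/\Cpct(\cH_S))\to K_0(\Cpct(\cH_S))\iso\Z$ to the class determined by $I-S(f)$. Concretely this is the Gohberg--Krein index theorem: the index of such a Wiener--Hopf operator is minus the winding number of its (nonvanishing) symbol.

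The crux is then to exhibit $f$ whose symbol is nonvanishing with winding number $+1$. Winding number $+1$ not only yields index $-1$; since for a Toeplitz-type operator at least one of the kernel and cokernel is trivial and the index here is negative, it forces the kernel to vanish, leaving a one-dimensional cokernel, exactly as required. Producing such an $f$ amounts to showing the connecting map $\partial$ is surjective, that is, that the extension above is nontrivial and realises every integer index; a generator is given by a symbol winding once around one of the two circles in the compactification $\dual{\Real^\times}\cup\{\infty\}\iso S^1\vee S^1$. Finally, since $C_c(\Raff)$ is dense in $\Cst(\Raff)$ and the relevant conditions on the symbol are open, I may take $f\in C_c(\Raff)\subseteq L^1(\Raff)$.

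The step I expect to be the main obstacle is the explicit index computation: determining the connecting map of $0\to\Cpct(\cH_S)\to\cA\to C_0(\dual{\Real^\times})\to 0$ with enough precision to be sure that some symbol gives index exactly $-1$ (rather than merely nonzero), and with the correct sign. This is precisely where Diep's detailed analysis of the representation $S$ does the real work; in a self-contained treatment I would instead identify $\cA$ with a concrete Toeplitz algebra and invoke the classical theory, which delivers the index $-1$ and the injectivity together.
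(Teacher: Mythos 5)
Your overall strategy---realising $S(\Cst(\Raff))$ as a Wiener--Hopf-type extension $0\to\Cpct(\cH_S)\to\cA\to C_0(\dual{\Real^\times})\to 0$ and producing the desired operator by index theory---is genuinely different from the paper's proof, which simply takes Diep's explicit function $h$ of \eqref{eq:Diep-function}, quotes Diep's Lemmas 3.6--3.7 for Fredholmness and Lemmas 3.9--3.10 for the kernel/cokernel computation, and passes to the adjoint $f=h^*$. But as written your argument has a genuine gap at the step where you claim injectivity. The dichotomy ``at least one of the kernel and cokernel is trivial'' is Coburn's lemma, and it is a theorem about honest Toeplitz/Wiener--Hopf operators $P M_\phi P$, proved by function-theoretic means; it is \emph{not} a property of elements of the Toeplitz C*-algebra. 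An isomorphism of C*-algebras identifying $\cA$ with a Toeplitz or Wiener--Hopf algebra only tells you that $I-S(f)$ agrees with such an operator \emph{modulo compacts} (indeed $\Cpct(\cH_S)\subseteq\cA$, so this cannot be improved), and a compact perturbation of an index $-1$ Toeplitz operator can perfectly well have one-dimensional kernel and two-dimensional cokernel. So winding number $+1$ buys you Fredholmness and index $-1$, but not injectivity---which is exactly why Diep needs separate lemmas, beyond the Fredholm property, for his specific $h$. Relatedly, your closing density step ``the relevant conditions on the symbol are open'' cannot deliver injectivity either, since injectivity is not a symbol condition.

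The gap is repairable, but by an argument you did not give: having found $T_0=I+A_0$ with $A_0\in\cA$ Fredholm of index $-1$, choose a finite-rank partial isometry $W$ mapping $\ker T_0$ into $\ker T_0^*$ (possible precisely because the index is negative); then $T_1\defeq T_0+W$ is injective and Fredholm of index $-1$, hence bounded below with closed range of codimension one, and $T_1=I+A_1$ with $A_1=A_0+W\in\cA$ since $\cA$ contains the compacts. Now use that $S(L^1(\Raff))$ is dense in $\cA$ and that both ``bounded below'' and the Fredholm index are stable under small norm perturbations: any $f\in L^1(\Raff)$ with $\norm{S(f)+A_1}$ sufficiently small satisfies the conclusion. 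Beyond this, be aware that the step you defer---verifying that the connecting map $K_1(C_0(\dual{\Real^\times}))\to K_0(\Cpct(\cH_S))\cong\mathbb{Z}$ actually attains the value $-1$, i.e.\ that the extension is a nontrivial Wiener--Hopf-type extension---is not a formality one can ``invoke'': for this particular algebra it is precisely the analytic content of Diep's computation (or of an equivalent index computation for the crossed product $C_0(\Real)\rtimes\Real^\times$). So, as it stands, your proposal reduces the theorem to its hardest ingredient rather than proving it.
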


For the reader's convenience we give an outline of the proof.
\begin{proof}
Define $h \in L^1(\Raff)$ by
\begin{equation}\label{eq:Diep-function} 
h\left(\twomat{a}{b}{0}{1}\right) = \chi_{\abs{a}\leq 1} \frac{2a^2}{\sqrt{2\pi}} \exp (-b^2/2) \;.
\end{equation}
By \cite[Lemmas 3.6 and 3.7]{Diep_book}, $I-S(h)$ is invertible modulo the compacts, i.e.~it is a Fredholm operator on $L^2(\cH_S)$. In particular, it has closed range. Furthermore, by \cite[Lemmas 3.9 and~3.10]{Diep_book}, $I-S(h)$ has $1$-dimensional kernel and is surjective. We therefore take $f=h^*\in L^1(\Raff)$; by the previous observations, $I-S(f) = (I-S(h))^*$ is injective with closed range, and has one-dimensional cokernel.
\end{proof}

\begin{rem}
We have only taken from Diep's work what is needed for the present article. For a fuller discussion of how $\Cst(\Raff)$ arises as an extension of an abelian $\Cst$-algebra by $\Cpct(\cH_S)$, and the role played by the Fredholm operator $I-S(h)$ in classifying $\Cst(\Raff)$, see~\cite[\S3]{Diep_book}.
\end{rem}

In the case $G=\Caff$, we have a result analogous to Theorem~\ref{t:Diep_Aff-R}, obtained by Rosenberg \cite{Ros_Pac76} using a suitable adaptation of Diep's methods.
Let $\cH_S$ be the Hilbert space $L^2(\Cplx^\times,\abs{z}^{-1}\,dz)$, and define a strongly continuous unitary representation $S: \Caff \to \Bdd(\cH_S)$ by
\[ (S_gf)(z)  = e^{i \Re wz}f(az) \qquad\text{for $f\in \cH$ and $g=\left(\begin{matrix} a & w \\ 0 & 1
    \end{matrix} \right)$.} 
\]

\begin{thm}[Rosenberg]
\label{t:Rosenberg_Aff-C}
There exists $f\in L^1(\Caff)$ such that $I-S(f):\cH_S \to \cH_S$ is injective with closed range of codimension one.
\end{thm}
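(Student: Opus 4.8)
The plan is to mirror the proof of Theorem~\ref{t:Diep_Aff-R} (Diep's case for $\Raff$), since the representation $S$ of $\Caff$ on $\cH_S = L^2(\Cplx^\times, \abs{z}^{-1}\,dz)$ is the direct complex-variable analogue of the one used for $\Raff$. The strategy is to exhibit an explicit $h\in L^1(\Caff)$, show that $I-S(h)$ is Fredholm with index $+1$ (i.e.\ surjective with one-dimensional kernel), and then take $f=h^*$ so that $I-S(f)=(I-S(h))^*$ is injective with closed range of codimension one. First I would write down the analogue of the test function~\eqref{eq:Diep-function}, replacing the real variable $b$ by the complex variable $w$, the Gaussian $\exp(-b^2/2)$ by a two-dimensional Gaussian $\exp(-\abs{w}^2/2)$ (up to normalization), and the cutoff $\chi_{\abs{a}\leq 1}$ together with the weight $a^2$ by the appropriate modulus-based expressions in $a\in\Cplx^\times$; the exact powers are dictated by the modular function of $\Caff$ and the requirement that the operator $S(h)$ land in the right trace ideal.

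The main work, as in Diep's case, is to prove that $I-S(h)$ is a Fredholm operator. For this I would appeal to Rosenberg's computations in \cite{Ros_Pac76}, which establish the analogues of \cite[Lemmas 3.6 and 3.7]{Diep_book}: the representation $S$ realizes $\Cst(\Caff)$ as an extension of a commutative $\Cst$-algebra (coming from the characters of $\Cplx^\times$, i.e.\ the ``trivial'' part of the dual) by the compact operators $\Cpct(\cH_S)$, and under the quotient map $I-S(h)$ maps to an invertible element of the abelian quotient. Concretely, one computes the symbol of $I-S(h)$ along the relevant boundary and checks it is nonvanishing, which gives invertibility modulo $\Cpct(\cH_S)$ and hence closed range and finite-dimensional kernel and cokernel. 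The representation $S$ being quasi-equivalent to the left regular representation ensures the induced map $\Cst(\Caff)\to\Bdd(\cH_S)$ is injective, exactly as recorded for the real case.

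It then remains to pin down the index. I would invoke the analogues of \cite[Lemmas 3.9 and 3.10]{Diep_book} as adapted by Rosenberg to show that $I-S(h)$ has one-dimensional kernel and is surjective; this is the delicate step, since it requires an explicit spectral or ODE/PDE analysis of the operator $S(h)$ rather than a soft Fredholm argument. Passing to the adjoint, $f=h^*\in L^1(\Caff)$ gives $I-S(f)$ injective with closed range and one-dimensional cokernel, as claimed.

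The hard part will be the explicit Fredholm-index computation in the complex setting: over $\Raff$ the relevant kernel equation reduces to a one-variable Gaussian/Mellin computation, whereas over $\Caff$ the variable $w$ ranges over $\Cplx$ and the action $z\mapsto az$ involves the full multiplicative group $\Cplx^\times$, so the eigenfunction analysis is genuinely two-dimensional. I expect that the bulk of this is already carried out in \cite{Ros_Pac76}; the role of the present exposition is to extract precisely the statement that $\ker(I-S(h))$ is one-dimensional and $I-S(h)$ is onto, and to record that this translates, via the adjoint, into the desired injectivity-with-codimension-one conclusion for $I-S(f)$.
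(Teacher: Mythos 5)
Your proposal is correct and follows essentially the same route as the paper: the paper simply states that the example and proof are Rosenberg's adaptation of Diep's method, citing \cite[Proposition 1]{Ros_Pac76}, and your outline---an explicit $h$, Fredholmness of $I-S(h)$ modulo compacts via the extension of a commutative $\Cst$-algebra by $\Cpct(\cH_S)$, the index computation giving a one-dimensional kernel and surjectivity, and then passing to $f=h^*$ so that $I-S(f)=(I-S(h))^*$ is injective with closed range of codimension one---is exactly that adaptation, mirroring the paper's own proof of Theorem~\ref{t:Diep_Aff-R} for $\Raff$.
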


We omit the example and proof, which can be found, modulo some small adjustments, in \cite[Proposition 1]{Ros_Pac76}.

\begin{proof}[Proof of Theorem~\ref{t:L1-not-df}]
We first treat the case of $\Raff$. Applying Theorem~\ref{t:Diep_Aff-R}, there exists a faithful unitary representation $S: \Raff\to \cH_S$, and some $f\in L^1(\Raff)$, such that $I-S(f):\cH_S \to \cH_S$ is injective with closed range of codimension one.
Since $I-S(f)$ is not invertible in $\Bdd(\cH_S)$, $f$ is not quasi-invertible in $L^1(\Raff)$. On the other hand, note that
\[ I-S(f^*\qm f) = I-S(f)^*\qm S(f) = (I-S(f))^*(I-S(f)), \]
which \emph{is} invertible in $\Bdd(\cH_S)$.

Now, since $\Raff$ is a Hermitian group and $S$ is faithful,
by Proposition~\ref{p:sp-perm} we have
\[ \sigma_{\fu{L^1(\Raff)}} (f^*\qm f) = \sigma_{\Bdd(\cH_S)} S(f^*\qm f)\;. \]
In particular, $f^*\qm f$ is quasi-invertible in $L^1(\Raff)$, with quasi-inverse~$h$,~say. Then $h\qm f^*\in L^1(\Raff)$ and $h\qm f^*\qm f=\id$, showing that $L^1(\Raff)$ is not \qdf.

The proof for $\Caff$ is exactly similar, except that we use Theorem~\ref{t:Rosenberg_Aff-C} instead of Theorem~\ref{t:Diep_Aff-R}.
\end{proof}

\end{section}

\begin{section}{Concluding thoughts}
Theorems~\ref{t:PF2_unimodular} and~\ref{t:L1-not-df} immediately suggest the natural question:

\begin{qu}
For which locally compact groups $G$ is $\Cst_r(G)$ \qdf?
\end{qu}

Even in the special case where $G$ is a solvable Lie group, Example~\ref{eg:cheat} suggests that a full characterization may be somewhat tricky to obtain.

\begin{qu}\label{qu:if-only}
Is the completion of a \qdf\ normed algebra itself \qdf?
\end{qu}

We suspect not, but know of no counterexample. On the other hand, if Question~\ref{qu:if-only} has a \emph{positive} answer, then $\PF_p(G)$ is \qdf\ for every unimodular group and all $p\in (1,\infty)$.
\end{section}

\subsection*{Acknowledgements}
The present paper has a somewhat tangled history, and has taken shape over the course of several years while the author has held positions at Universit\'e Laval, the University of Saskatchewan, and presently Lancaster University. He thanks the Departments of Mathematics and Statistics in each of these institutions for their support, and also acknowledges 
the hospitality of the University of Leeds during a visit in May 2010, as part of a working semester on {\it Banach algebra and operator space techniques in topological group theory.}
The work done at the University of Saskatchewan was partly supported by 
NSERC Discovery Grant 402153-2011.

Some of the results presented here were included in an old preprint of the author, ``{\it Group $\Cst$-algebras which are quasi-directly finite}\/'', arXiv {\tt 1003.1650}. That preprint, which remains unpublished, is now superseded by the present article.

Thanks are due to various colleagues and correspondents, in particular C.~Zwarich for exchanges in 2008 which sowed the seeds for this line of investigation, J.~Meyer for pointing out the reference~\cite{Kap_AJM53}, and E.~Samei for lending the author his copy of~\cite{Der_book}.
The diagrams in this paper were prepared using Paul Taylor's \texttt{diagrams.sty} macros.

\appendix
\begin{section}{Ingredients in the proof of Theorem~\ref{t:pfp-in-pf2}}
\label{app:Herz-by-hand}
To fix notation and provide background, we quickly summarize the necessary definitions from \cite{Herz_p-space}. Throughout, $p\in (1,\infty)$ and $q$ is the conjugate index to~$p$.
Fix a left Haar measure $\mu$ on~$G$. Define a contractive linear map $\theta_p: L^p(G)\ptp L^q(G) \to C_0(G)$ by
\[ \theta_p(f\otimes g)(x) = \pair{\lm_p(x)f}{g} = \int_G f(x^{-1}y)g(y) \,d\mu(y) \qquad(f\in L^p(G), q\in L^q(G), x\in G), \]
and let $\FA_p(G)\subseteq C_0(G)$ be the \emph{coimage} of $\theta_p$ (more explicitly, the image of $\theta_p$ equipped with the quotient norm induced from $\left(L^p(G)\ptp L^q(G)\right) / \ker\theta_p$).
Restricting $\theta_p$ to the dense subspace $C_c(G)\tp C_c(G)$ yields a linear map $\theta: C_c(G) \tp C_c(G) \to C_c(G)$, whose image we denote by $\cA_c(G)$. Note that $\cA_c(G)$ is dense in $\FA_p(G)$, for every~$p$.

Now we make some preliminary observations.
Integration on $G$ defines a pairing $C_c(G) \times C_0(G)\to \Cplx$ and hence gives natural maps $C_c(G) \to C_0(G)^* \to \FA_p(G)^*$, the second map being restriction. We may thus regard $\theta_p^*$ as a map $C_c(G) \to (L^p(G)\ptp L^q(G))^* \iso \Bdd(L^p(G))$. Explicitly, given $f\in C_c(G)$ and $\xi\in L^p(G)$, $\eta\in L^q(G)$, we have
\[ \pair{\theta_p^*(f)\xi}{\eta} = \int_G f(t) \theta_p(\xi\tp\eta)(t)\,d\mu(t). \]
Moreover, if $\xi,\eta\in C_c(G)$ then it is easily checked that
$\pair{\theta_p^*(f)\xi}{\eta}=\pair{\lm_p(f)\xi}{\eta}$, and since both $\theta_p^*(f)$ and $\lm_p(f)$ are bounded operators, density implies
$\theta_p^*(f)=\lm_p(f)$.
In particular, $\lm_p(f)\in \theta_p^*(\FA_p(G))$ for all $f\in C_c(G)$.
Finally, since $\theta_p^* :\FA_p(G)^* \to \Bdd(L^p(G))$ is the adjoint of a quotient map, it is an isometry with weak-star closed range. Hence, $\theta_p(\FA_p(G))^*\supseteq \PF_p(G)$, so there is an isometry $\phi_p: \PF_p(G) \to \FA_p(G)^*$ such that $\theta_p^*\phi_p$ is just the inclusion of $\PF_p(G)$ into $\Bdd(L^p(G))$.


The key result needed from \cite{Herz_p-space} is the following theorem, whose proof we omit.
It should be emphasized that the theorem is independent of the results in \cite{Herz_AIF73}, which is not always made clear in the secondary literature.
\begin{thm}[{\cite[Theorem 1]{Herz_p-space}}] 
\label{t:Ap-an-A2-module}
Let $G$ be a locally compact group and let $p\in (1,\infty)$. Then the map
$\FA_p(G)\ptp \FA_2(G) \to C_0(G)$ defined by
multiplication of functions takes values in $\FA_p(G)$, and
\begin{equation}\label{eq:A_p(G)-is-A(G)-module}
\norm{fh}_{\FA_p(G)} \leq \norm{f}_{\FA_2(G)} \norm{h}_{\FA_p(G)}
\end{equation}
for all $f\in \FA_p(G)$ and $h\in\FA_2(G)$.
\end{thm}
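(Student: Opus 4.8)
The plan is to realise the product $fh$ as a coefficient function of a tensor-product representation, to straighten that representation by a Fell-type absorption isometry, and only then to estimate the $\FA_p(G)$-norm. First I would reduce to elementary coefficients. By Eymard's theorem every element of $\FA_2(G)=A(G)$ is a single coefficient $f(x)=\ip{\lm_2(x)\xi}{\eta}$ with $\norm{\xi}_2\norm{\eta}_2=\norm{f}_{\FA_2(G)}$, while a general $h\in\FA_p(G)$ expands as a norm-convergent sum of coefficients $\pair{\lm_p(x)a_k}{b_k}$ with $a_k\in L^p(G)$, $b_k\in L^q(G)$. Summing the single-coefficient estimate over a near-optimal expansion of $h$ then recovers \eqref{eq:A_p(G)-is-A(G)-module}, so it suffices to bound $\norm{f\,h_0}_{\FA_p(G)}$ by $\norm{f}_{\FA_2(G)}\norm{a}_p\norm{b}_q$ for a single $h_0(x)=\pair{\lm_p(x)a}{b}$. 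For such $f$ and $h_0$,
\[
 f(x)h_0(x)=\pair{(\lm_p(x)\tp\lm_2(x))(a\tp\xi)}{b\tp\eta},
\]
a coefficient of the representation $\lm_p\tp\lm_2$ acting on the mixed-norm space $L^p(G;L^2(G))$ of $L^2(G)$-valued functions, paired against $L^q(G;L^2(G))$.

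Next I would introduce the substitution operator $W$ given by $WF(x,y)=F(x,xy)$. Since $y\mapsto xy$ preserves left Haar measure for each fixed $x$, $W$ is an isometric automorphism of both $L^p(G;L^2(G))$ and $L^q(G;L^2(G))$, and a one-line computation gives the absorption identity $W(\lm_p(g)\tp\lm_2(g))W^{-1}=\lm_p(g)\tp I$. Transporting the vectors and the pairing through $W$ turns $fh_0$ into a coefficient of $\lm_p\tp I$, and disintegrating over the second variable exhibits
\[
 fh_0=\int_G \pair{\lm_p(\cdot)A_y}{B_y}\,dy, \qquad A_y(x)=a(x)\xi(xy),\quad B_y(x)=b(x)\eta(xy),
\]
a ``continuous sum'' of $\FA_p(G)$-coefficients, where $A,B$ have mixed norms $\norm{a}_p\norm{\xi}_2$ and $\norm{b}_q\norm{\eta}_2$. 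When $p=2$ this is exactly the assertion that $A(G)$ is a Banach algebra under pointwise multiplication, and the estimate drops out.

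The hard part is the norm bound for $p\neq2$. The soft move---bounding $\norm{fh_0}_{\FA_p(G)}$ by the triangle inequality $\int_G\norm{A_y}_p\norm{B_y}_q\,dy$ and then by Cauchy--Schwarz in $y$---fails, because it forces the mixed norms $\Norm{A}_{L^2_y(L^p_x)}$ and $\Norm{B}_{L^2_y(L^q_x)}$, and Minkowski's integral inequality controls these by the correct quantities only when $p\geq2$ and $p\leq2$ respectively; in fact one can exhibit $a,b,\xi,\eta$ for which $\int_G\norm{A_y}_p\norm{B_y}_q\,dy$ already exceeds $\norm{a}_p\norm{b}_q\norm{\xi}_2\norm{\eta}_2$. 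Equivalently, the genuine content is that a coefficient of $\lm_p\tp I$ on the vector-valued space $L^p(G;L^2(G))$ must itself lie in $\FA_p(G)$ with the expected norm, which does not follow from any rearrangement of mixed norms: the $y$-integral enjoys cancellation that the triangle inequality destroys. To capture it I would argue by complex (Stein) interpolation, forming an analytic family through raising $\abs{a},\abs{b},\abs{\xi},\abs{\eta}$ to suitable complex powers so that one edge of the strip reduces to the $L^2\times L^2$ Cauchy--Schwarz pairing (yielding $\norm{\xi}_2\norm{\eta}_2$) and the other to the $L^p\times L^q$ H\"older pairing (yielding $\norm{a}_p\norm{b}_q$), with the target exponent sitting at the intermediate parameter; the three-lines lemma then delivers \eqref{eq:A_p(G)-is-A(G)-module}. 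This interpolation step, rather than the representation-theoretic bookkeeping, is the real substance of the theorem and the heart of \cite[Theorem 1]{Herz_p-space}.
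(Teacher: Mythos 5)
First, a point of comparison: the paper itself gives \emph{no} proof of this statement --- it is quoted from Herz with the explicit remark ``whose proof we omit'' --- so your attempt can only be measured against Herz's original argument. Your structural work is correct and well-judged: the reduction to a single $\FA_2$-coefficient (Eymard) times a single elementary $\FA_p$-coefficient; the operator $W F(x,y)=F(x,xy)$, which is indeed an isometry of $L^p(G;L^2(G))$ intertwining $\lm_p\tp\lm_2$ with $\lm_p\tp I$; the resulting formula $fh_0=\int_G \pair{\lm_p(\cdot)A_y}{B_y}\,dy$ with $\norm{A}_{L^p_x(L^2_y)}=\norm{a}_p\norm{\xi}_2$ and $\norm{B}_{L^q_x(L^2_y)}=\norm{b}_q\norm{\eta}_2$; and, crucially, the diagnosis that the naive bound $\int_G\norm{A_y}_p\norm{B_y}_q\,dy$ genuinely fails (your counterexample claim is right --- one can already arrange this on a two-point group; note only that your Minkowski conditions are attached to the wrong factors: it is $A$ that is controlled when $p\le 2$ and $B$ when $p\ge 2$). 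So you have isolated the correct crux: coefficients of $\lm_p\tp I$ on $L^p(G;L^2(G))$, paired against $L^q(G;L^2(G))$, must lie in $\FA_p(G)$ contractively.

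The gap is that the proposed Stein interpolation cannot be set up. Since $\norm{\cdot}_{\FA_p(G)}$ is not itself an analytic quantity, any three-lines argument must fix an arbitrary $T\in \FA_p(G)^*\subseteq\Bdd(L^p(G))$ with $\norm{T}\le 1$ (realized as in Appendix~\ref{app:Herz-by-hand}) and bound $\Phi(z)=\pair{(T\tp I)A_z}{B_z}$ on both edges using \emph{only} the hypothesis $\norm{T}_{\Bdd(L^p(G))}\le 1$. Hence the exponent in the variable on which $T$ acts cannot move: both edges are forced to be of the form $L^p_x(L^r_y)\times L^q_x(L^{r'}_y)$, with only the fibre exponent $r$ varying. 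The one free edge is $r=p$ (Fubini gives $\norm{T\tp I}_{\Bdd(L^p(G\times G))}\le\norm{T}$); to land on fibre exponent $2$ at an interior point, the other edge must be $r=q$, i.e.\ one needs $\norm{T\tp I}_{\Bdd(L^p(G;L^q))}\le\norm{T}$. That endpoint is \emph{false} for general bounded operators on $L^p$ (when $p>2$, $L^q$ has no type $2$, so it is not a subquotient of an $L^p$-space, and Kwapie\'n's characterization of the spaces with the vector-valued extension property then yields counterexamples), and for $T\in\FA_p(G)^*$ it is exactly the extreme case of Herz's theorem, strictly stronger than the $r=2$ case you are proving --- so assuming it is circular. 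Your literal edges fare no better: an ``$L^2\times L^2$'' edge would require $\norm{T}_{\Bdd(L^2(G))}\le\norm{T}_{\Bdd(L^p(G))}$, which is essentially the embedding of Theorem~\ref{t:pfp-in-pf2} and, by the Lohou\'e results cited in Section~\ref{s:intro}, fails for non-amenable $G$. What actually closes the argument --- and is in substance Herz's own route, namely his result that $L^2$ is a ``$p$-space'' --- is not interpolation but the Gaussian rotation trick (Marcinkiewicz--Zygmund with constant exactly $1$): take $(\gamma_j)$ i.i.d.\ standard \emph{complex} Gaussians on a probability space $\Omega$ and $c_p\defeq\norm{\gamma_1}_p$; the pointwise identity $\int_\Omega\bigabs{\sum_j \gamma_j(\omega)c_j}^p\,d\omega = c_p^p\bigl(\sum_j\abs{c_j}^2\bigr)^{p/2}$ and Fubini give, for every bounded $T$ on $L^p$,
\[
\Norm{\Bigl(\sum\nolimits_j\abs{Tf_j}^2\Bigr)^{1/2}}_p^p
= c_p^{-p}\int_\Omega\Norm{T\Bigl(\sum\nolimits_j\gamma_j(\omega)f_j\Bigr)}_p^p\,d\omega
\le \norm{T}^p\,\Norm{\Bigl(\sum\nolimits_j\abs{f_j}^2\Bigr)^{1/2}}_p^p,
\]
that is, $\norm{T\tp I_{L^2}}_{\Bdd(L^p(G;L^2))}\le\norm{T}$ with constant $1$. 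Feeding this into the duality you set up --- the functional $T\mapsto\pair{(T\tp I)A}{B}$ on $\FA_p(G)^*$ has norm at most $\norm{A}_{L^p(L^2)}\norm{B}_{L^q(L^2)}$, is \wstar-continuous (check it on elementary tensors and pass to limits using the uniform bound), and therefore is an element of $\FA_p(G)$ agreeing with the coefficient function $\pair{(\lm_p(\cdot)\tp I)A}{B}$ --- proves your key lemma and hence \eqref{eq:A_p(G)-is-A(G)-module}. In short: the bookkeeping is right, the difficulty is correctly located, but the tool you reach for at the decisive step cannot work there; the missing idea is the Gaussian embedding of $L^2$ into $L^p$, not the three-lines lemma.
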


We also need a result that is stated and proved in \cite[\S9]{Herz_AIF73}, where it is described as being folklore.

\begin{lem}\label{l:BAI-for-Ap}
Let $G$ be amenable and let $p\in (1,\infty)$. Given any compact set $K\subseteq G$, and any $\veps>0$, there exists a compactly supported function $f\in \FA_p(G)$ which is identically $1$ on~$K$ and has $\FA_p(G)$-norm at most $1+\veps$.
\end{lem}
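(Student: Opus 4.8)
The plan is to write down an explicit compactly supported element of $\FA_p(G)$, using amenability of $G$ in the guise of Leptin's covering condition. Recall that a locally compact group $G$ is amenable if and only if, for every compact set $C\subseteq G$ and every $\delta>0$, there exists a compact set $U$ with $0<\mu(U)<\infty$ and $\mu(CU)\leq (1+\delta)\mu(U)$; this is the nontrivial harmonic-analytic input, and everything else is a direct verification. (This is precisely the ``folklore'' fact that the surrounding discussion attributes to \cite[\S9]{Herz_AIF73}.)

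Given $K$ and $\veps$, I would first apply Leptin's condition to the compact set $C=K^{-1}$ and a parameter $\delta$ to be fixed at the end, obtaining a compact set $U$ of positive finite measure with $\mu(K^{-1}U)\leq(1+\delta)\mu(U)$. I would then take $f=\chi_{K^{-1}U}\in L^p(G)$ and $g=\chi_U\in L^q(G)$ and set $u\defeq \mu(U)^{-1}\,\theta_p(f\tp g)$, which lies in $\FA_p(G)$ by definition of the coimage. The point of choosing the support $K^{-1}U$ of $f$ is that for $x\in K$ and $y\in U$ one has $x^{-1}y\in K^{-1}U$, whence $f(x^{-1}y)=1$; consequently
\[
u(x)=\mu(U)^{-1}\int_G f(x^{-1}y)\,g(y)\,d\mu(y)=\mu(U)^{-1}\int_U 1\,d\mu(y)=1
\]
for every $x\in K$, so $u$ is identically $1$ on $K$. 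Moreover $u(x)\neq 0$ forces $U\cap xK^{-1}U\neq\emptyset$, i.e.\ $x\in UU^{-1}K$, so $\supp u$ is contained in the compact set $UU^{-1}K$ and $u$ is compactly supported.

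It remains to estimate the norm. Since $u$ is a scalar multiple of $\theta_p$ applied to an elementary tensor, the quotient ($\FA_p$) norm obeys
\[
\norm{u}_{\FA_p(G)}\leq \mu(U)^{-1}\norm{f}_p\norm{g}_q=\mu(U)^{-1}\mu(K^{-1}U)^{1/p}\mu(U)^{1/q}=\left(\frac{\mu(K^{-1}U)}{\mu(U)}\right)^{1/p}\leq (1+\delta)^{1/p}.
\]
Choosing $\delta=\veps$ and using $1/p<1$ gives $(1+\veps)^{1/p}\leq 1+\veps$, so $u$ has all the required properties.

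I expect the main obstacle to be purely one of bookkeeping rather than of substance: one must track left translates against the \emph{left} Haar measure $\mu$ and the convention $\theta_p(f\tp g)(x)=\int_G f(x^{-1}y)g(y)\,d\mu(y)$, which is exactly why the construction uses $K^{-1}U$ rather than $KU$, ensuring that the resulting function equals $1$ on $K$ itself and not merely on some translate of $K$. Beyond that, the only genuinely nonelementary ingredient is the equivalence of amenability with Leptin's covering condition, which I would simply invoke.
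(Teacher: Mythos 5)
Your proof is correct, and it is essentially the standard argument: the paper itself gives no proof of this lemma, citing instead \cite[\S9]{Herz_AIF73}, where Herz's ``folklore'' proof runs along exactly the lines you describe --- Leptin's covering condition applied to (a translate/inverse of) $K$, followed by taking $\theta_p$ of normalized characteristic functions and estimating the quotient norm by the projective tensor norm of the elementary tensor. Your bookkeeping (using $K^{-1}U$ so that $u\equiv 1$ on $K$ itself, the support bound $UU^{-1}K$, and the exponent arithmetic giving $(\mu(K^{-1}U)/\mu(U))^{1/p}\leq(1+\veps)^{1/p}\leq 1+\veps$) all checks out against the paper's conventions for $\theta_p$ and the coimage norm on $\FA_p(G)$.
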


\begin{proof}[Proof of Theorem~\ref{t:pfp-in-pf2}]
Let $h\in \FA_2(G)$ have compact support. Taking $K=\supp(h)$ in Lemma~\ref{l:BAI-for-Ap} and using \eqref{eq:A_p(G)-is-A(G)-module}, we get $h\in \FA_p(G)$ and $\norm{h}_{\FA_p(G)} \leq (1+\veps)\norm{h}_{\FA_2(G)}$, for all $\veps>0$. Since $\cA_c(G)$ is norm-dense in $\FA_2(G)$, this implies that $\FA_2(G) \subseteq \FA_p(G)$ with non-increase of norms.
Moreover, since $\imath(\cA_c(G))=\cA_c(G)$, and $\cA_c(G)$ is dense in $\FA_p(G)$, the inclusion $\imath: \FA_2(G) \to \FA_p(G)$ has dense range. Hence $\imath^*:\FA_p(G)^* \to \FA_2(G)^*$ is injective, and has norm $\leq 1$.

Now consider the following commutative diagram:
\[ \begin{diagram}[tight,height=3em,width=5em]
C_c(G) & \rTo^{\lm_p} & \PF_p(G) & \rTo^{\phi_p} & \FA_p(G)^* \\
 &\rdTo_{\lm_2} & & & \dTo^{\imath^*} \\
 & & \PF_2(G) & \rTo_{\phi_2} & \FA_2(G)^*
\end{diagram} \]
For each $f\in C_c(G)$,
$ \norm{\lm_2(f)} = \norm{ \phi_2\lm_2(f)}
 = \norm{\imath^* \phi_p\lm_p(f)} \leq \norm{\lm_p(f)}$.
Therefore, since $\lm_p(C_c(G))$ is norm-dense in $\PF_p(G)$, there is a unique continuous linear map $J: \PF_p(G)\to\PF_2(G)$ such that $J\lm_p=\lm_2$. By continuity, $J$ is an algebra homomorphism.
Moreover, for each $f\in C_c(G)$,
\[ \phi_2 J \lm_p(f) = \phi_2  \lm_2(f) = \imath^*\phi_p \lm_p(f) \]
so by density, $\phi_2 J = \imath^*\phi_p$. Since $\imath^*\phi_p$ is injective, so is $J$, and the proof of Theorem~\ref{t:pfp-in-pf2} is complete.
\end{proof}

\end{section}

\newcommand{\arX}[1]{{\tt arXiv} #1}
\newcommand{\url}[1]{\texttt{#1}}  

\bibliography{df-pfp}
\bibliographystyle{siam}

\vfill

\noindent
\begin{tabular}{l}
\contact
\end{tabular}

\end{document}